\documentclass[review]{elsarticle}
\usepackage{stmaryrd}
\usepackage{bbm}
\usepackage[dvipsnames]{xcolor}
\usepackage{amsmath}
\usepackage{amsthm}
\usepackage{amssymb}
\usepackage{mathtools}
\usepackage{subcaption}
\usepackage{etoolbox}
\usepackage{environ}
\usepackage{ulem}
\usepackage{multicol}
\usepackage{listings}
\usepackage{graphicx}
\usepackage{multirow}
\usepackage{subcaption}
\usepackage{blkarray}
\usepackage{algorithm}
\usepackage{algpseudocode}

\usepackage{lineno,hyperref}
\modulolinenumbers[5]

\usepackage{stmaryrd}

\newcommand{\suchthat}{\;\ifnum\currentgrouptype=16 \middle\fi|\;}

\newcommand{\argmax}{\operatornamewithlimits{arg\,max}}
\newcommand{\argmin}{\operatornamewithlimits{arg\,min}}

\newcommand{\MID}{\operatorname{mid}}
\newcommand{\bc}{\operatorname{bc}}
\newcommand{\level}{\operatorname{level}}

\newcommand{\bp}{\operatorname{bp}}

\newcommand{\ord}{\operatorname{ord}}
\newcommand{\mc}{\operatorname{mc}}

\newcommand{\flag}{\operatorname{flag}}
\newcommand{\True}{\operatorname{True}}
\newcommand{\False}{\operatorname{False}}
\newcommand{\PP}{\operatorname{P}}
\newcommand{\NP}{\operatorname{NP}}

\newcommand{\prevc}{\operatorname{prev\_card}}
\newcommand{\newc}{\operatorname{new\_card}}
\newcommand{\clique}{\operatorname{clique}}

\newtheorem*{rep@theorem}{\rep@title}
\newcommand{\newreptheorem}[2]{%
\newenvironment{rep#1}[1]{%
 \def\rep@title{#2 \ref{##1}}%
 \begin{rep@theorem}}%
 {\end{rep@theorem}}}
\makeatother

\newtheorem{corollary}{Corollary}
\newtheorem{theorem}{Theorem}
\newreptheorem{theorem}{Theorem}
\newtheorem{lemma}{Lemma}
\newreptheorem{lemma}{Lemma}
\newtheorem{remark}{Remark}
\newtheorem{definition}{Definition}

\newreptheorem{claim}{Claim}
\newtheorem{proposition}{Proposition}
\newreptheorem{proposition}{Proposition}

\usepackage{tikz,tkz-graph}
\tikzstyle{vertex}=[circle, draw, inner sep=0pt, minimum size=1.5em]
\tikzstyle{svertex}=[draw, inner sep=0pt, minimum size=1.5em]

\usetikzlibrary{decorations}
\usetikzlibrary{arrows.meta,patterns}

\journal{ }









\bibliographystyle{elsarticle-num-sort}

\begin{document}

\begin{frontmatter}

\title{Maximal Clique and Edge-Ranking Bounds of Biclique Cover Number}

\author[1]{Bochuan Lyu\corref{cor1}}
\ead{bl46@rice.edu}
\author[1]{Illya V. Hicks}
\ead{ivhicks@rice.edu}
\cortext[cor1]{Corresponding author}
\address[1]{Rice University, Department of Computational Applied Mathematics and Operations Research, United States of America}

\begin{abstract}
The biclique cover number $(\bc)$ of a graph $G$ denotes the minimum number of complete bipartite (biclique) subgraphs to cover all the edges of the graph. In this paper, we show that $\bc(G) \geq \lceil \log_2(\mc(G^c)) \rceil \geq \lceil \log_2(\chi(G)) \rceil$ for an arbitrary graph $G$, where $\chi(G)$ is the chromatic number of $G$ and $\mc(G^c)$ is the number of maximal cliques of the complementary graph $G^c$, i.e., the number of maximal independent sets of $G$. We also show that $\lceil \log_2(\mc(G^c)) \rceil$ could be a strictly tighter lower bound of the biclique cover number than other existing lower bounds. We can also provide a bound of $\bc(G)$ with respect to the biclique partition number ($\bp$) of $G$: $\bc(G) \geq \lceil \log_2(\bp(G) + 1) \rceil$ or $\bp(G) \leq 2^{\bc(G)} - 1$ if $G$ is co-chordal. Furthermore, we show that $\bc(G) \leq \chi_r'(\mathcal{T}_{\mathcal{K}^c})$, where $G$ is a co-chordal graph such that each vertex is in at most two maximal independent sets and $\chi_r'(\mathcal{T}_{\mathcal{K}^c})$ is the optimal edge-ranking number of a clique tree of $G^c$.
\end{abstract}


\begin{keyword}
Biclique Covers, Maximal Cliques, Biclique Partitions, Edge-Rankings of Trees, Co-Chordal Graphs
\end{keyword}

\end{frontmatter}

\nolinenumbers

\section{Introduction}
The biclique cover number $(\bc)$ of a graph $G$ denotes the minimum number of complete bipartite (biclique) subgraphs to cover all the edges of the graph. The minimum biclique (edge) cover problem (MBCP) on a graph $G$ is referred to finding such a collection of bicliques, a biclique cover, with a cardinality of the biclique cover number of $G$. It was first proven to be NP-complete by Orlin~\cite{orlin1977contentment}. MBCP is not only complex on the general graph but also intricate even on some special classes of graphs. M{\"u}ller~\cite{muller1996edge} proved that MBCP remains NP-complete on chordal bipartitie graphs. Several research works focused on approximation algorithms for MBCP~\cite{chalermsook2014nearly, gruber2007inapproximability, simon1990approximate}, but it turned out that there doesn't exist an efficient algorithm even for approximating MBCP. It cannot be approximated in polynomial time to less than $O(|V|^{1 - \epsilon})$ or $O(|E|^{1/2 - \epsilon})$ factor for any $\epsilon > 0$ unless $\PP = \NP$ shown by~\cite{chalermsook2014nearly}. 

Despite of the hardness of solving MBCP on general graphs, the biclique cover number of some well-structured graphs are known, such as complete graphs and $2n$-vertex crown graphs~\cite{de1981boolean}, and grid graphs~\cite{guo2018biclique} and there exists polynomial time algorithm to solve the problem exactly on some special classes of graphs such as C4-free graphs~\cite{muller1996edge}, and domino-free graphs~\cite{amilhastre1998complexity}. 

One related problem to MBCP is boolean (binary) matrix factorization ($k$-BMF). The goal of $k$-BMF is to factorize 0-1 matrix with size $m \times n$ into the boolean product of two 0-1 matrices with sizes $m \times k$ and $k \times n$ under some norm. The boolean rank of a 0-1 matrix is the smallest number of $k$ for the exact factorization (the boolean product of the two matrices is exactly the orignal matrix). Monson et al.~\cite{monson1995survey} showed that finding a boolean rank of a 0-1 matrix is equivalent to solve MBCP on a bipartite graph represented by the matrix.


In this paper, we also build a connection between MBCP on co-chordal graph and another combinatorial optimization problem: optimal edge-rankings of trees. Iyer et al.~\cite{iyer1991edge} presented a polynomial-time approximation algorithm for optimal edge-rankings in trees with the worst case performance of ratio 2 and left whether the problem is NP-hard as an open question. Later, Torre et al.~\cite{de1995optimal} showed that the optimal solution can be found in polynomial time. Zhou and Nishizeki~\cite{zhou1994efficient,zhou1995finding} provided a $O(n^2)$-time algorithm for optimal edge-rankings in trees and then improved the time complexity to $O(n \log n)$. In a more recent work, Lam and Yue~\cite{lam2001optimal} designed a linear-time algorithm to solve the problem optimally.

In terms of applications, G{\"u}nl{\"u}k~\cite{gunluk2007new} used the biclique cover number to provide an upper bound for min-cut max-flow ratio for multicommodity flow problems. Epasto and Upfal~\cite{epasto2018efficient} studied two computational biology problems: human leukocyte antigen (HLA) serology~\cite{nau1978mathematical}, and Mod-Resc Parsimony Inference problem introduced by~\cite{nor2012mod}. In computer systems, Ene et al.~\cite{ene2008fast} studied heuristic of MBCP on bipartite graphs to solve role mining problem in role-based access control system. Another heuristic of MBCP on bipartite graphs is also studied by \cite{amilhastre1999fa} for a minimization of finite automata problem arised in constraint satisfaction problems. Biclique edge cover graph is also studied by~\cite{hirsch2006biclique} for confluent drawings, which is a field studying how to draw nonplanar graphs on a plane. Huchette and Vielma~\cite{huchette2019combinatorial} and Lyu et al.~\cite{lyu2022modeling} solved MBCP in order to find small and strong mixed-integer programming (MIP) formulations of disjunctive constraints. 

In Section~\ref{sec:pre_bc}, we will introduce some basic notations in graph theory. In Section~\ref{sec:lower_bound}, we will prove that the biclique cover number of a graph $G$ is no less than $\lceil \log_2(\mc(G^c)) \rceil$, where $\mc(G^c)$ is the number of maximal cliques of the complementary graph $G^c$. We also discuss that the new lower bound is not worse than the bound in~\cite{harary1977biparticity} and show that the new lower bound can be better than other existing bounds~\cite{fishburn1996bipartite,jukna2009covering}. In section~\ref{sec:upper_bound}, we will also show that $\bc(G) \geq \lceil \log_2(\bp(G) + 1) \rceil$ or $\bp(G) \leq 2^{\bc(G)} - 1$ given a co-chordal graph $G$, where $\bp(G)$ is the biclique partition number of $G$. It is a better bound than $\bp(G) \leq \frac{1}{2}(3^{\bc(G)} - 1)$~\cite{pinto2013biclique} if $G$ is co-chordal and $\bc(G) > 1$. Then, we focus on a heuristic of finding biclique cover on co-chordal graph in Section~\ref{sec:upper_bound}. By bounding the size of the biclique cover returned by the heuristic in Algorithm~\ref{alg:biclique_tot_bc}, the biclique cover number of a co-chordal graph $G$ is at most $\mc(G^c)-1$ and if each vertex in $G$ is in at most two maximal indepedent sets of $G$, we can provide a tighter upper bound of $G$ by using the optimal edge-ranking number of a clique tree of $G^c$. Then, we show that $\bc(G) = \lceil \log_2(\mc(G^c)) \rceil$ if each vertex in $G$ is in at most two maximal indepedent sets of $G$ and $G^c$ has a clique tree with an optimal edge-ranking number of $\lceil \log_2(\mc(G^c)) \rceil$. In Section~\ref{sec:c_fw_bc}, we summarize our contributions and discuss the potential future work.


\section{Preliminaries} \label{sec:pre_bc}

A \textit{simple graph} is a pair $G = (V, E)$ where $V$ is a finite set of vertices and the edge set $E \subseteq \{uv: u, v\in V, u \neq v\}$. We use $V(G)$ and $E(G)$ to denote the vertex set and edge set of the graph $G$ respectively. Two vertices are \textit{adjacent} in $G$ if there is an edge between them. The \textit{neighborhood} of a vertex $v$ of a graph $G$, $N_G(v)$, is the set of all vertices that are adjacent with $v$.

A \textit{subgraph} $G' = (V', E')$ of $G$ is a graph where $V' \subseteq V$ and $E' \subseteq \{uv \in E: u, v \in V'\}$. Given $A \subseteq V$, the \textit{subgraph} of $G$ \textit{induced} by $A$ is denoted as $G(A) = (A, E_A)$, where $E_A = \{uv \in E: u, v \in A\}$. A \textit{clique} is a subset of vertices of an graph $G$ such that every two distinct vertices are adjacent in $G$. A \textit{maximal clique} of $G$ is a clique of $G$ such that it is not a proper subset of any clique of $G$. Note that a vertex set with only one vertex $K_1$ is also a clique. We denote the number of maximal cliques of $G$ as $\mc(G)$ and we use $\mathcal{K}_G$ or $\mathcal{K}$ to denote the set of all maximal cliques of $G$. A \textit{maximum clique} of $G$ is a clique of $G$ with the maximum number of vertices and we denote that number as the clique number of $G$, $\omega(G)$. An \textit{independent set} of a graph $G$ is a set of vertices that are not adjacent with each other in $G$. Similarly, a \textit{maximal independent set} is an independent set that is not a proper subset of any independent set and a \textit{maximum independent set} is an independent set with the maximum number of vertices. Note that an independent set can be empty or only have one vertex.

The \textit{chromatic number} of a graph $G$, $\chi(G)$, is the smallest number of colors needed to color the vertices of $G$ such that no two adjacent vertices share the same color. A \textit{matching} in $G$ is a set of edges without common vertices and a \textit{maximum matching} of $G$ is a matching with the maximum size, which is denoted as $M(G)$. Given a graph $G$, $G_E$ is a graph with the edge set of $G$ as the vertex set and two vertices is adjacent if the corresponding edges have distinct end-nodes and are not included in a cycle of length 4 in $G$.

Given two vertex sets $U$ and $V$, we denote $U \times V$ to be the edge set $\{uv: u \in U, v \in V\}$. A \textit{bipartite} graph $G = (L \cup R, E)$ is a graph where $L$ and $R$ are disjointed vertex sets with the edge set $E \subseteq L \times R$. A \textit{biclique} graph is a complete bipartite graph $G = (L \cup R, E)$ where $E = L \times R$ and we denote it as $\{L, R\}$ for short. A \textit{biclique cover} of a graph $G$ is a collection of biclique subgraphs of $G$ such that every edge of $G$ is in at least one biclique of the collection. The \textit{minimum biclique cover problem} (MBCP) on $G$ is to find a biclique cover with the minimum number of bicliques in the collection and we denote that value to be $\bc(G)$. A \textit{biclique partition} of a graph $G$ is a biclique cover of $G$ that cover each edge exactly once and the biclique partition number of $G$, $\bp(G)$, is the size of the minimum biclique partition of $G$.

A graph $C_n = (V, E)$ is a \textit{cycle} if the vertices and edges: $V = \{v_1, v_2, \hdots, v_n\}$ and $E = \{v_1v_2, v_2v_3, \hdots, v_{n-1}v_n, v_nv_1\}$. A graph is a \textit{tree} if it is connected and does not have any subgraph that is a cycle. We denote that $\llbracket n \rrbracket = \{1, 2, \hdots, n\}$ where $n$ is a positive integer. A vertex is \textit{simplicial} if its neighborhood is a clique. An ordering $v_1, v_2, \hdots, v_n$ of $V$ is a \textit{perfect elimination ordering} if for all $i \in \llbracket n \rrbracket$, $v_i$ is simplicial on the induced subgraph $G(\{v_j: j \in \{i, i+1, \hdots, n\}\})$. An ordering function $\sigma: \llbracket n \rrbracket \rightarrow V$ is defined to describe the ordering of vertices $V$. A \textit{clique tree} $\mathcal{T}_{\mathcal{K}}$ for a chordal graph $G$ is a tree where each vertex represents a maximal clique of $G$ and satisfies the \textit{clique-intersection property}: given any two distinct maximal cliques $K^1$ and $K^2$ in the tree, every clique on the path between $K^1$ and $K^2$ in $\mathcal{T}_{\mathcal{K}}$ contains $K^1 \cap K^2$. We also define the \textit{middle set} of edge $e \in \mathcal{T}_{\mathcal{K}}$, $\MID(e)$, to be the intersection of the vertices of cliques on its two ends.

A graph is a \textit{windmill} graph $D^{(m)}_n$ if it can be obtained by taking $m$ copies of the complete graph $K_n$ sharing with a universal vertex. We use the notation of \textit{co-} to represent the complementary graph, for example, a graph is \textit{co-windmill} if its complementary graph is windmill. 

A graph is \textit{chordal} if there is no induced cycle subgraph of length greater than 3. A graph is chordal if and only if it has a clique tree~\cite{blair1993introduction}. Also, a graph is chordal if and only if it has a perfect elimination ordering~\cite{fulkerson1965incidence}. The perfect elimination ordering of a chordal graph can be obtained by \textit{lexicographic breadth-first search} (LexBFS)~\cite{rose1976algorithmic}. Note that the LexBFS algorithm can be implemented in linear-time: $O(|V| + |E|)$ with partition refinement~\cite{habib2000lex}. Another approach to produce perfect elimination ordering of a chordal graph is maximum cardinality search (MCS) introduced by Tarjan in unpublished lecture notes. At each step. algorithm selects to label an unlabeled vertex adjacent to the largest number of labeled vertices as the next vertex (label arbitrary one if there are ties). Tarjan and Yannakaki~\cite{tarjan1984simple} provided a detailed MCS algorithm that can run in $O(|V| + |E|)$ time. Furthermore, Blair and Peyton~\cite{blair1993introduction} expanded the MCS algorithm to compute a clique tree of a chordal graph $G$ in polynomial time.

\section{A New Lower Bound of Biclique Cover Number} \label{sec:lower_bound}

In this section, we will derive a new lower bound of biclique cover number using the number of maximal cliques of the complementary graph. Then, we will also compare the new bound with the existing bounds in the literature. First, we will show that any partition of the set of maximal cliques of $G^c$ can be used to construct a biclique subgraph of $G$.

\begin{lemma}\label{lm:bc_partition}
Given an arbitrary graph $G$ and a set of maximal cliques $\mathcal{K}_{G^c} = \{K^i\}_{i=1}^d$ of its complement $G^c$, we can partition the set $\llbracket d \rrbracket$ into two arbitrary non-empty sets $I$ and $J$ such that $I \cup J = \llbracket d \rrbracket$. If $\bigcup_{i \in I} K^i \setminus \bigcup_{j \in J} K^j \neq \emptyset$ and $\bigcup_{j \in J} K^j \setminus \bigcup_{i \in I} K^i \neq \emptyset$, then $\left\{\bigcup_{i \in I} K^i \setminus \bigcup_{j \in J} K^j, \bigcup_{j \in J} K^j \setminus \bigcup_{i \in I} K^i \right\}$ is a biclique subgraph of $G$.
\end{lemma}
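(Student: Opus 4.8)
The plan is to unpack the definition of a biclique subgraph and reduce the statement to a single adjacency claim. Write $L = \bigcup_{i\in I} K^i \setminus \bigcup_{j\in J} K^j$ and $R = \bigcup_{j\in J} K^j \setminus \bigcup_{i\in I} K^i$. First I would observe that $L$ and $R$ are disjoint: every vertex of $L$ lies in $\bigcup_{i\in I}K^i$, while every vertex of $R$ is by construction excluded from $\bigcup_{i\in I}K^i$. Combined with the hypothesis that both sets are non-empty, it then suffices to prove that $uv\in E(G)$ for every $u\in L$ and $v\in R$; that is precisely the assertion that $\{L,R\}$ is a biclique subgraph of $G$.

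Next I would record the two membership facts the set differences hand us for free: if $u\in L$ then $u\notin K^j$ for every $j\in J$, and if $v\in R$ then $v\notin K^i$ for every $i\in I$. In particular $u\in\bigcup_{i\in I}K^i$ while $v\notin\bigcup_{i\in I}K^i$, so $u\neq v$, which we will need in order to talk about the edge $uv$ at all.

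The crucial step is a contradiction argument that exploits maximality. Suppose $uv\notin E(G)$; since $G$ is simple and $u\neq v$, this means $uv\in E(G^c)$, i.e. $\{u,v\}$ is a clique of $G^c$. Extend $\{u,v\}$ to a maximal clique of $G^c$ (possible since $G$ is finite). Because $\mathcal{K}_{G^c}=\{K^i\}_{i=1}^{d}$ is the set of \emph{all} maximal cliques of $G^c$, that maximal clique equals $K^\ell$ for some $\ell\in\llbracket d\rrbracket$, and $\ell$ lies in $I$ or in $J$. If $\ell\in I$, then $v\in K^\ell$ contradicts $v\notin K^i$ for all $i\in I$; if $\ell\in J$, then $u\in K^\ell$ contradicts $u\notin K^j$ for all $j\in J$. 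Either way we reach a contradiction, so $uv\in E(G)$, and since $u\in L$, $v\in R$ were arbitrary this gives $L\times R\subseteq E(G)$, finishing the proof.

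The one point I would flag is that this argument genuinely requires $\mathcal{K}_{G^c}$ to be the complete list of maximal cliques of $G^c$, not merely some sub-collection: the extension of $\{u,v\}$ must land back inside the indexed family. Indeed, dropping an arbitrary maximal clique can make $\{L,R\}$ fail to induce a complete bipartite subgraph, so the "all maximal cliques" reading is essential; everything else is routine bookkeeping with set differences.
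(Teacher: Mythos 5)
Your proof is correct and follows essentially the same route as the paper's: both arguments reduce to showing that no maximal clique of $G^c$ contains both $u\in L$ and $v\in R$, and then use the fact that every edge of $G^c$ extends to (equivalently, is covered by) some maximal clique to conclude $uv\in E(G)$. Your contradiction phrasing and the explicit remarks on disjointness, $u\neq v$, and the necessity of listing \emph{all} maximal cliques are just more careful bookkeeping of the same idea.
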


\begin{proof}
Suppose that $L= \bigcup_{i \in I} K^i \setminus \bigcup_{j \in J} K^j$ and $R= \bigcup_{j \in J} K^j \setminus \bigcup_{i \in I} K^i$. 

Given arbitrary $u \in L$ and $v \in R$, $u$ isn't in any maximal clique in $\{K^i\}_{i \in I}$ and $v$ isn't in any maximal clique in $\{K^j\}_{j \in J}$. Hence, $\{u, v\} \not\subseteq K$ for any $K \in \mathcal{K}_{G^c}$. Since the edges of $G^c$ is covered by its maximal cliques, i.e., $E(G^c) = \bigcup_{K \in \mathcal{K}_{G^c}} E(K)$, $\{L, R\}$ is a biclique subgraph of $G$.
\end{proof}

Then, we will show that any biclique subgraph $\{L, R\}$ of a graph $G$, there exists a partition of the set of maximal cliques of $G^c$ such that $\{L, R\}$ is also a biclique subgraph of the biclique constructed by the partition as in Lemma~\ref{lm:bc_partition}.

\begin{proposition} \label{prop:bc_larger_biclique}
Given an arbitrary biclique $\{L, R\}$ of a graph $G$, there exists a partition of the set of all maximal cliques of $G^c$, $\mathcal{K}_{G^c} = \{K^i\}_{i=1}^d$: $\{K^i\}_{i \in I}$ and $\{K^j\}_{j \in J}$ such that $\{L, R\}$ is a subset of another biclique subgraph \\
$\left\{\bigcup_{i \in I} K^i \setminus \bigcup_{j \in J} K^j, \bigcup_{j \in J} K^j \setminus \bigcup_{i \in I} K^i \right\}$ of $G$.
\end{proposition}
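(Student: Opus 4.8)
The plan is to build the partition of $\mathcal{K}_{G^c}$ directly from the biclique $\{L,R\}$ by looking at which side of the biclique each maximal clique of $G^c$ intersects. The key observation is that since $L \times R \subseteq E(G)$, no edge of $G^c$ has one endpoint in $L$ and the other in $R$; consequently, for every maximal clique $K \in \mathcal{K}_{G^c}$, the set $K$ cannot meet both $L$ and $R$. So each maximal clique falls into (at least) one of two types: those disjoint from $R$ (call their index set $I$) and those disjoint from $L$ (call their index set $J$). If a maximal clique meets neither $L$ nor $R$, assign it to either side arbitrarily; this makes $I$ and $J$ a genuine partition of $\llbracket d \rrbracket$ with $I \cup J = \llbracket d\rrbracket$.

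With this partition in hand, I would set $L' = \bigcup_{i \in I} K^i \setminus \bigcup_{j \in J} K^j$ and $R' = \bigcup_{j \in J} K^j \setminus \bigcup_{i \in I} K^i$, and verify $L \subseteq L'$ and $R \subseteq R'$ (the symmetric claim for $R$ being identical). First, every vertex $u \in L$ lies in some maximal clique of $G^c$ (a single vertex is a clique, hence contained in a maximal one), and any maximal clique containing $u$ is disjoint from $R$, so its index lies in $I$; thus $u \in \bigcup_{i\in I} K^i$. Second, $u \notin \bigcup_{j \in J} K^j$: every $K^j$ with $j \in J$ is disjoint from $L$ by construction, so it cannot contain $u$. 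Hence $u \in L'$. Once $L \subseteq L'$ and $R \subseteq R'$ are established, the fact that $\{L', R'\}$ is a biclique subgraph of $G$ — and therefore that $\{L,R\}$ is a sub-biclique of it — follows immediately from Lemma~\ref{lm:bc_partition}, provided its hypotheses hold; but $L' \supseteq L \neq \emptyset$ and $R' \supseteq R \neq \emptyset$, so the non-emptiness conditions are automatic.

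The only real subtlety is the disjointness claim "$K$ meets at most one of $L$, $R$", and handling the maximal cliques that meet neither, so that $I$ and $J$ can be made disjoint rather than merely covering. Concretely, if some $v \in L$ and $w \in R$ both lay in a single clique $K$ of $G^c$, then $vw \in E(G^c)$, contradicting $vw \in L \times R \subseteq E(G)$; this is the one place the biclique hypothesis is used. After that, defining $I := \{\,i : K^i \cap R = \emptyset\,\}$ and $J := \llbracket d \rrbracket \setminus I$ gives a partition in which every $j \in J$ has $K^j \cap R \neq \emptyset$, hence (by the disjointness claim) $K^j \cap L = \emptyset$, which is exactly what the argument above needs; the cliques meeting neither side are swept into $I$ harmlessly. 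So I expect the write-up to be short: state the disjointness claim, fix the partition by the rule just given, and then check the two inclusions $L \subseteq L'$, $R \subseteq R'$ before invoking Lemma~\ref{lm:bc_partition}.
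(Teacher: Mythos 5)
Your proposal is correct and follows essentially the same route as the paper's proof: partition $\mathcal{K}_{G^c}$ according to which side of the biclique each maximal clique intersects (using that no maximal clique of $G^c$ can meet both $L$ and $R$, and that every vertex lies in some maximal clique), verify the two inclusions, and invoke Lemma~\ref{lm:bc_partition}. The only cosmetic difference is which side absorbs the cliques meeting neither $L$ nor $R$ — the paper collects the cliques meeting $L$ and dumps the rest into the $R$-side, whereas you collect the cliques meeting $R$ and dump the rest into the $I$-side.
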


\begin{proof}
By definition of the maximal cliques and biclique subgraphs, we know that given arbitrary $u \in L$ and $v \in R$, there does not exist $K \in \mathcal{K}_{G^c}$ such that $\{u, v\} \in K$.

Let's denote that $\mathcal{K}_u = \{K \in \mathcal{K}_{G^c}: u \in K\}$. Then, let $\mathcal{K}_L = \bigcup_{u \in L} \mathcal{K}_u$ and $\mathcal{K}_R = \mathcal{K}_{G^c} \setminus \mathcal{K}_L$. Also, let $I$ and $J$ be the partition of $\llbracket d \rrbracket$ such that $\{K^i\}_{i \in I} = \mathcal{K}_L$ and $\{K^j\}_{j \in J} = \mathcal{K}_R$.

Given an arbitrary $u \in L$, $u \in \bigcup_{K \in \mathcal{K}_u} K \subseteq \bigcup_{K \in \mathcal{K}_L} K$. Since $\mathcal{K}_R = \mathcal{K}_{G^c} \setminus \mathcal{K}_L \subseteq \mathcal{K}_{G^c} \setminus \mathcal{K}_u$ and $u \in \bigcup_{K \in \mathcal{K}_u} K$, then $u \not\in \bigcup_{K \in \mathcal{K}_R} K$. Hence, $L \subseteq \bigcup_{K \in \mathcal{K}_L} K \setminus \bigcup_{K \in \mathcal{K}_R} K$.

Given an arbitrary $v \in R$, $v \not\in \bigcup_{K \in \mathcal{K}_u} K$ for any $u \in L$. Thus, $v \not\in \bigcup_{K \in \mathcal{K}_L} K$. Since $v \in V(G) = \bigcup_{K \in \mathcal{K}_{G^c}} K$, then $v \in \bigcup_{K \in \mathcal{K}_R} K$, which implies that $$R \subseteq \bigcup_{K \in \mathcal{K}_R} K \setminus \bigcup_{K \in \mathcal{K}_L} K.$$

Since both $L$ and $R$ are nonempty, by Lemma~\ref{lm:bc_partition}, $$\left\{\bigcup_{i \in I} K^i \setminus \bigcup_{j \in J} K^j, \bigcup_{j \in J} K^j \setminus \bigcup_{i \in I} K^i \right\}$$ is a biclique subgraph of $G$.
\end{proof}

\begin{remark} \label{rm:bc_nonempty}
Given a nonempty graph $G$ and two different arbitrary maximal cliques of $G^c$, $K^1$ and $K^2$, $G(K^1 \cup K^2)$ has at least one edge.
\end{remark}

\begin{proof}
Assume that $G(K^1 \cup K^2)$ is an empty graph, then $K^1 \cup K^2$ is a clique of $G^c$. Also, since $K^1 \neq K^2$, $K^1$ and $K^2$ are not maximal cliques of $G^c$, which is a contradiction.
\end{proof}

\begin{remark} \label{rm:bc_not_cover}
Given an arbitrary biclique $\{L, R\}$ of a graph $G$, the set of all maximal cliques of $G^c$, $\mathcal{K}_{G^c} = \{K^i\}_{i=1}^d$ and a partition of $\llbracket d \rrbracket$: $I$ and $J$, the biclique $\left\{\bigcup_{i \in I} K^i \setminus \bigcup_{j \in J} K^j, \bigcup_{j \in J} K^j \setminus \bigcup_{i \in I} K^i \right\}$ does not have any edge in $G\left(\bigcup_{i \in I} K^i \right)$ or $G\left(\bigcup_{j \in J} K^j\right)$.
\end{remark}

By using Remarks~\ref{rm:bc_nonempty} and~\ref{rm:bc_not_cover}, we can provide new lower bound of the biclique cover number of a graph $G$: $\bc(G) \geq \lceil \log_2(\mc(G^c)) \rceil$. 

\begin{theorem} \label{thm:bc_log}
Given a graph $G$, $\bc(G) \geq \lceil \log_2(\mc(G^c)) \rceil$.
\end{theorem}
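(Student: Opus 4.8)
The plan is to prove the equivalent statement $\mc(G^c) \le 2^{\bc(G)}$ by an injective encoding of the maximal cliques of $G^c$ into binary strings of length $\bc(G)$.

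First I would fix an optimal biclique cover $\mathcal{B} = \{B_1,\dots,B_k\}$ of $G$ with $k = \bc(G)$, and write $\mathcal{K}_{G^c} = \{K^i\}_{i=1}^d$, so that $d = \mc(G^c)$. By Proposition~\ref{prop:bc_larger_biclique}, each $B_t$ is contained in a biclique of the form $\left\{\bigcup_{i\in I_t}K^i\setminus\bigcup_{j\in J_t}K^j,\ \bigcup_{j\in J_t}K^j\setminus\bigcup_{i\in I_t}K^i\right\}$ for some partition $(I_t,J_t)$ of $\llbracket d\rrbracket$; since replacing a biclique of the cover by a larger biclique of $G$ leaves $\mathcal{B}$ a biclique cover, I may assume every $B_t$ already has this special form. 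I then encode each index $p\in\llbracket d\rrbracket$ by the string $x^p\in\{0,1\}^k$ defined by $x^p_t = 0$ if $p\in I_t$ and $x^p_t = 1$ if $p\in J_t$.

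The heart of the argument is showing that $p\mapsto x^p$ is injective. Suppose $x^p = x^q$ with $p\neq q$. By Remark~\ref{rm:bc_nonempty}, $G(K^p\cup K^q)$ contains an edge $uv$. Fix any $t$. Because $x^p_t = x^q_t$, the indices $p$ and $q$ lie on the same side of $(I_t,J_t)$, so either $K^p\cup K^q\subseteq\bigcup_{i\in I_t}K^i$ or $K^p\cup K^q\subseteq\bigcup_{j\in J_t}K^j$; in particular both $u$ and $v$ lie in one of these two unions. Remark~\ref{rm:bc_not_cover} then says $B_t$ contains no edge of $G\!\left(\bigcup_{i\in I_t}K^i\right)$ nor of $G\!\left(\bigcup_{j\in J_t}K^j\right)$, hence $uv\notin E(B_t)$. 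As $t$ was arbitrary, the edge $uv$ is covered by no member of $\mathcal{B}$, contradicting that $\mathcal{B}$ is a biclique cover of $G$. Therefore the encoding is injective, which gives $d \le 2^k$, i.e.\ $\bc(G) \ge \log_2\mc(G^c)$; since $\bc(G)$ is an integer, this sharpens to $\bc(G)\ge\lceil\log_2\mc(G^c)\rceil$.

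I expect the only delicate point to be the step ``$u,v$ on the same side of $(I_t,J_t)$ $\Rightarrow$ $uv$ is not covered by $B_t$'': this is exactly what Remarks~\ref{rm:bc_nonempty} and~\ref{rm:bc_not_cover} are tailored to supply, and it relies on each $K^i$ being an independent set of $G$ (a maximal clique of $G^c$), so that a single $K^i$ can never contain both endpoints of an edge of $G$. A variant that bypasses Proposition~\ref{prop:bc_larger_biclique} is to encode $K^p$ directly from an arbitrary cover $\{L_t,R_t\}_{t=1}^k$ by setting $x^p_t = 0$ iff $K^p\cap L_t = \emptyset$ (well defined because $K^p$ meets at most one of $L_t,R_t$) and then repeating the same edge-chasing argument; I would present the version above since it reuses the lemmas already established.
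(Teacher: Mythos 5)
Your proof is correct and follows essentially the same route as the paper's: both use Proposition~\ref{prop:bc_larger_biclique} to enlarge each biclique of an optimal cover into one induced by a partition of $\mathcal{K}_{G^c}$, and then combine Remarks~\ref{rm:bc_nonempty} and~\ref{rm:bc_not_cover} to show that every pair of distinct maximal cliques of $G^c$ must be separated by some biclique of the cover. The only difference is packaging: the paper phrases this conclusion as exhibiting a biclique cover of the complete graph $\mathcal{K}_d$ and invokes $\bc(\mathcal{K}_d)=\lceil\log_2 d\rceil$, whereas you inline the standard proof of that fact via the injective binary encoding $p\mapsto x^p$.
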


\begin{proof}
Let $\{\{L^k, R^k\}\}_{k=1}^t$ be a minimum biclique cover of $G$. Also, suppose that $\mathcal{K}_{G^c} = \{K^i\}_{i=1}^d$ is the set of all maximal cliques of $G^c$. Then, for each $k \in \llbracket t \rrbracket$, there exists a partition of $\llbracket d \rrbracket$, $I^k$ and $J^k$, such that $\{L^k, R^k\}$ is a subgraph of another biclique subgraph $\left\{\bigcup_{i \in I^k} K^i \setminus \bigcup_{j \in J^k} K^j, \bigcup_{j \in J^k} K^j \setminus \bigcup_{i \in I^k} K^i \right\}$ of $G$ by Proposition~\ref{prop:bc_larger_biclique}. We construct a complete graph $\mathcal{K}_d$ with vertices $\llbracket d \rrbracket$ such that each vertex $i$ represents $K^i$ in the maximal clique sets of $G^c$. By Remark~\ref{rm:bc_nonempty}, we know that $G(K^{i'} \cup K^{j'})$ is nonempty for arbitrary two distinct $i',j' \in \llbracket d \rrbracket$. By Remark~\ref{rm:bc_not_cover}, if the intersection between edges of $G(K^{i'} \cup K^{j'})$ and edges of $\left\{\bigcup_{i \in  I^k} K^i \setminus \bigcup_{j \in J^k} K^j, \bigcup_{j \in J^k} K^j \setminus \bigcup_{i \in I^k} K^i \right\}$ is nonempty, then $i' \in I^k$ and $j' \in J^k$, or $i' \in J^k$ and $j' \in I^k$.

Since $\{\{L^k, R^k\}\}_{k=1}^t$ is a minimum biclique cover of $G$, then arbitrary edge $i'j'$ of $\mathcal{K}_d$ must be in a biclique subgraph $\{I^k, J^k\}$ of $\mathcal{K}_d$ for some $k \in \llbracket t \rrbracket$. Then, we know that $\{\{I^k, J^k\}\}_{k=1}^t$ is a biclique cover of $\mathcal{K}_d$. Since $\bc(\mathcal{K}_d) = \lceil \log_2(d) \rceil$, then $t \geq \lceil \log_2(d) \rceil$, where $t = \bc(G)$ and $d = \mc(G^c)$.
\end{proof}

\begin{theorem}
Given a graph $G = (V, E)$, $\mc(G^c) \geq \chi(G)$.
\end{theorem}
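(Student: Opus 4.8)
The plan is to produce $\chi(G)$ \emph{distinct} maximal independent sets of $G$; since the maximal independent sets of $G$ are precisely the maximal cliques of $G^c$, this immediately gives $\mc(G^c) \geq \chi(G)$. To do this I would fix an optimal proper coloring of $G$ with color classes $C_1, \dots, C_k$, where $k = \chi(G)$. Each $C_i$ is a nonempty independent set of $G$, so (since $G$ is finite) each $C_i$ is contained in at least one maximal independent set $M_i$ of $G$. Choose one such $M_i$ for every $i$.

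The crux is to verify that $M_1, \dots, M_k$ are pairwise distinct. Suppose instead that $M_i = M_j$ for some $i \neq j$. Then $C_i \cup C_j \subseteq M_i$, so $C_i \cup C_j$ is an independent set of $G$; replacing the two classes $C_i$ and $C_j$ by the single class $C_i \cup C_j$ while keeping all other classes unchanged produces a proper coloring of $G$ using $k-1$ colors, contradicting $k = \chi(G)$. Hence the sets $M_1,\dots,M_k$ are distinct, so $G$ has at least $k$ maximal independent sets, i.e.\ $\mc(G^c) \geq k = \chi(G)$.

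I do not expect a genuine obstacle: the one place requiring care is the distinctness of the $M_i$, and the "merge two color classes" argument above settles it. (A slightly longer alternative would be induction on $\chi(G)$: pick a maximal independent set $S$ of $G$, observe $\chi(G - S) \geq \chi(G) - 1$, apply the inductive hypothesis to obtain $\chi(G)-1$ maximal independent sets of $G-S$, extend each to a maximal independent set of $G$, and check these extensions are distinct from one another and from $S$ using the maximality of the sets in $G - S$; but the coloring argument is cleaner and avoids the bookkeeping.)
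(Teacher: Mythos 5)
Your proof is correct, but it runs in the opposite direction from the paper's. You start from an optimal proper coloring of $G$, extend each color class $C_i$ to a maximal independent set $M_i$ of $G$ (equivalently, a maximal clique of $G^c$), and show the $M_i$ are pairwise distinct via the merging argument: $M_i = M_j$ would make $C_i \cup C_j$ independent, yielding a $(\chi(G)-1)$-coloring. This is a valid way to exhibit $\chi(G)$ distinct maximal cliques of $G^c$, and the distinctness step is exactly the point that needs care; you handle it correctly. The paper instead goes from the maximal cliques of $G^c$ to a coloring: it lists the maximal cliques $K^1,\dots,K^d$ of $G^c$, assigns each vertex the smallest index $i$ with $v \in K^i$ (every vertex lies in some maximal clique, since $\{v\}$ extends to one), and observes that two vertices of the same color lie in a common $K^j$, hence are nonadjacent in $G$; this gives a proper $d$-coloring directly, so $\chi(G) \leq d = \mc(G^c)$ with no distinctness argument needed. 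Both proofs hinge on the same correspondence between independent sets of $G$ and cliques of $G^c$; the paper's construction is marginally shorter because it avoids your merging step, while yours makes explicit that an optimal coloring witnesses $\chi(G)$ genuinely different maximal independent sets, which is a slightly stronger structural statement.
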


\begin{proof}
Let $\mathcal{K}_{G^c} = \{K^i\}_{i=1}^d$ be the set of all maximal cliques of $G^c$. Then, we construct a color procedure on the vertices of $G$: if a vertex in $K^i$ and not in any $K^j$ for $j \in \llbracket i-1 \rrbracket$, then set color $i$ to it. It is not hard to see that we can color all vertices in $V$ with only $d$ colors. Then, we want to prove that each pair of adjacent vertices in $G$ have different colors. Let $u,v \in V$ be two vertices that share the same color. Then, according to our color procedure, it is not hard to see that $u, v \in K^j$ for some $j \in \llbracket d \rrbracket$. Also, $K^j$ is a clique in $G^c$ so $uv \not\in E$. Hence, $\chi(G) \leq d = \mc(G^c)$.
\end{proof}

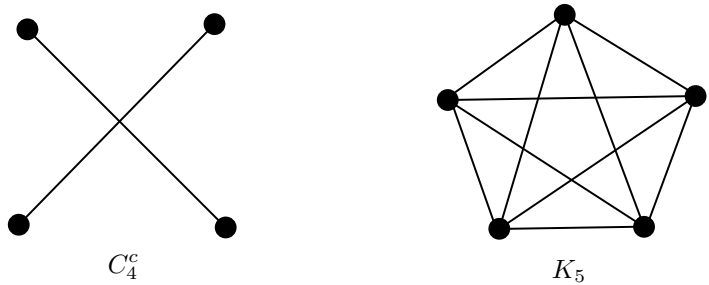
\begin{figure}[H]
    \centering
\tikzset{every picture/.style={line width=0.75pt}} 

\begin{tikzpicture}[x=0.75pt,y=0.75pt,yscale=-1,xscale=1]

\draw    (83,110) -- (183,210) ;
\draw  [fill={rgb, 255:red, 0; green, 0; blue, 0 }  ,fill opacity=1 ] (78,110) .. controls (78,107.24) and (80.24,105) .. (83,105) .. controls (85.76,105) and (88,107.24) .. (88,110) .. controls (88,112.76) and (85.76,115) .. (83,115) .. controls (80.24,115) and (78,112.76) .. (78,110) -- cycle ;
\draw  [fill={rgb, 255:red, 0; green, 0; blue, 0 }  ,fill opacity=1 ] (178,210) .. controls (178,207.24) and (180.24,205) .. (183,205) .. controls (185.76,205) and (188,207.24) .. (188,210) .. controls (188,212.76) and (185.76,215) .. (183,215) .. controls (180.24,215) and (178,212.76) .. (178,210) -- cycle ;
\draw    (177.37,107.38) -- (78.63,208.62) ;
\draw  [fill={rgb, 255:red, 0; green, 0; blue, 0 }  ,fill opacity=1 ] (177.3,102.38) .. controls (180.07,102.34) and (182.33,104.55) .. (182.37,107.31) .. controls (182.4,110.07) and (180.19,112.34) .. (177.43,112.38) .. controls (174.67,112.41) and (172.4,110.2) .. (172.37,107.44) .. controls (172.33,104.68) and (174.54,102.41) .. (177.3,102.38) -- cycle ;
\draw  [fill={rgb, 255:red, 0; green, 0; blue, 0 }  ,fill opacity=1 ] (78.57,203.62) .. controls (81.33,203.59) and (83.6,205.8) .. (83.63,208.56) .. controls (83.67,211.32) and (81.46,213.59) .. (78.7,213.62) .. controls (75.93,213.66) and (73.67,211.45) .. (73.63,208.69) .. controls (73.6,205.93) and (75.81,203.66) .. (78.57,203.62) -- cycle ;
\draw  [fill={rgb, 255:red, 0; green, 0; blue, 0 }  ,fill opacity=1 ] (349,102.67) .. controls (349,99.91) and (351.24,97.67) .. (354,97.67) .. controls (356.76,97.67) and (359,99.91) .. (359,102.67) .. controls (359,105.43) and (356.76,107.67) .. (354,107.67) .. controls (351.24,107.67) and (349,105.43) .. (349,102.67) -- cycle ;
\draw  [fill={rgb, 255:red, 0; green, 0; blue, 0 }  ,fill opacity=1 ] (290,145.67) .. controls (290,142.91) and (292.24,140.67) .. (295,140.67) .. controls (297.76,140.67) and (300,142.91) .. (300,145.67) .. controls (300,148.43) and (297.76,150.67) .. (295,150.67) .. controls (292.24,150.67) and (290,148.43) .. (290,145.67) -- cycle ;
\draw  [fill={rgb, 255:red, 0; green, 0; blue, 0 }  ,fill opacity=1 ] (316,210.67) .. controls (316,207.91) and (318.24,205.67) .. (321,205.67) .. controls (323.76,205.67) and (326,207.91) .. (326,210.67) .. controls (326,213.43) and (323.76,215.67) .. (321,215.67) .. controls (318.24,215.67) and (316,213.43) .. (316,210.67) -- cycle ;
\draw  [fill={rgb, 255:red, 0; green, 0; blue, 0 }  ,fill opacity=1 ] (389,209.67) .. controls (389,206.91) and (391.24,204.67) .. (394,204.67) .. controls (396.76,204.67) and (399,206.91) .. (399,209.67) .. controls (399,212.43) and (396.76,214.67) .. (394,214.67) .. controls (391.24,214.67) and (389,212.43) .. (389,209.67) -- cycle ;
\draw  [fill={rgb, 255:red, 0; green, 0; blue, 0 }  ,fill opacity=1 ] (415,143.67) .. controls (415,140.91) and (417.24,138.67) .. (420,138.67) .. controls (422.76,138.67) and (425,140.91) .. (425,143.67) .. controls (425,146.43) and (422.76,148.67) .. (420,148.67) .. controls (417.24,148.67) and (415,146.43) .. (415,143.67) -- cycle ;
\draw    (350,104.83) -- (298.67,141.83) ;
\draw    (318,206.5) -- (297.67,148.83) ;
\draw    (389,209.67) -- (326,210.67) ;
\draw    (418.33,147.17) -- (396.67,205.83) ;
\draw    (357.33,104.5) -- (416,140.5) ;
\draw    (352.67,104.5) -- (321,210.67) ;
\draw    (394,209.67) -- (295,145.67) ;
\draw    (420,143.67) -- (321,210.67) ;
\draw    (420,143.67) -- (295,145.67) ;
\draw    (354,102.67) -- (394,209.67) ;

\draw (346,225) node [anchor=north west][inner sep=0.75pt]   [align=left] {$\displaystyle K_{5}$};
\draw (122,222) node [anchor=north west][inner sep=0.75pt]   [align=left] {$\displaystyle C_{4}^{c}$};
\end{tikzpicture}
    \caption{Two different examples demonstrate that new lower bound is better than some existing lower bounds.}
    \label{fig:better_bounds}
\end{figure}

We want to note that our new lower bound of biclique cover in Theorem~\ref{thm:bc_log} can be strictly tighter than some existing lower bound in the literature: given a graph $G$, $\bc(G) \geq \lceil \log_2(\chi(G)) \rceil$~\cite{harary1977biparticity}, $\bc(G) \geq \omega(G_E)$~\cite{fishburn1996bipartite}, and $\bc(G) \geq \frac{|M(G)|^2}{|E(G)|}$~\cite{jukna2009covering}, where $\chi(G)$ is chromatic number of $G$, $\omega(G)$ is the clique number of $G$, $M(G)$ is a maximum matching in $G$ and $G_E$ is a graph with the edge set of $G$ as the vertex set and two vertices is adjacent if the corresponding edges have distinct end-nodes and are not included in a cycle of length 4 in $G$. For example, as shown in Figure~\ref{fig:better_bounds}, $\chi(C^c_4) = 2$ and $\mc(C_4) = 4$ where $C^c_4$ is the complementary graph of a cycle with length 4, $C_4$. In another example of Figure~\ref{fig:better_bounds}, $\lceil \log_2(\mc(K^c_5)) \rceil = 3 > \max\left\{\omega(K_{5,E}), \frac{|M(K_5)|^2}{|E(K_5)|} \right\}$. Note that $\omega(K_{5,E}) = 2$ and $\frac{|M(K_5)|^2}{|E(K_5)|} = \frac{2^2}{10} < 1$. Although listing all maximal cliques could take exponential time on general graphs, it could take polynomial time or even linear time on the special class of graphs, like chordal. In the next section, we will focus on a heuristic of biclique cover number of co-chordal graphs.


\section{A Heuristic for Biclique Cover Number of Co-Chordal Graphs} \label{sec:upper_bound}

In this section, we will design a heuristic for finding biclique covers on co-chordal graphs. First, we will show that $\bc(G) \geq \lceil \log_2(\bp(G) + 1) \rceil$ if $G$ is co-chordal. Then, we will show that the biclique cover number of a co-chordal graph $G$ is upper bounded by $\mc(G)-1$. However, there is still a gap between $\lceil \log_2(\mc(G^c)) \rceil$ and $\mc(G^c) - 1$. We want to further tighten the upper bound of the biclique cover number. We want to note that the effort of reducing the gap has been attempted in Algorithm 2 of~\cite{lyu2022modeling} in the setting of combinatorial disjunctive constraints. However, Lyu et al.~\cite{lyu2022modeling} only provided a greedy biclique merging procedure and no theoretical improvements on the upper bound of the biclique cover number than $\mc(G^c) - 1$.  Furthermore, we will introduce optimal edge-rankings on trees and use it to provide a better upper bound of biclique cover number for a co-chordal graph such that each vertex is in at most two independent sets.

\begin{algorithm}[H]
\begin{algorithmic}[1]
\State \textbf{Input}: A clique tree $\mathcal{T}_{\mathcal{K}^c}$ of a chordal graph $G^c$.
\State \textbf{Output}: A biclique partition $\bp$ of the complementary graph $G$ of $G^c$.
\Function{FindPartition}{$\mathcal{T}_{\mathcal{K}^c}$}
\If{$|V(\mathcal{T}_{\mathcal{K}^c})| \leq 1$}
\State \textbf{return} $\emptyset$.
\EndIf
\State Select an arbitrary edge $e$ to cut $\mathcal{T}_{\mathcal{K}^c}$ into two components $\mathcal{T}_{\mathcal{K}^c_1}$ and $\mathcal{T}_{\mathcal{K}^c_2}$. \label{ln:bc_partition_sep_edge}
\State $L \leftarrow \bigcup_{K \in V(\mathcal{T}_{\mathcal{K}^c_1})} V(K) \setminus \MID(e)$; $R \leftarrow \bigcup_{K \in V(\mathcal{T}_{\mathcal{K}^c_2})} V(K) \setminus \MID(e)$
\State \textbf{return} $\{\{L, R\}\} \cup \Call{FindPartition}{\mathcal{T}_{\mathcal{K}^c_1}} \cup \Call{FindPartition}{\mathcal{T}_{\mathcal{K}^c_2}}$
\EndFunction
\end{algorithmic}
\caption{\small Find a biclique partition of a co-chordal graph $G$ given a clique tree of $G^c$.} \label{alg:biclique_partition_heuristic}
\end{algorithm}

We want to show that $\bc(G) \geq \lceil \log_2(\bp(G) + 1) \rceil$ if $G$ is co-chordal. Also, we want to remark that $\bc(G) = \lceil \log_2(\bp(G) + 1) \rceil$ if $G$ is a complete graph which is co-chordal~\footnote{If $G$ is a complete graph with n vertices, $\bc(G) = \lceil \log_2(n) \rceil$ and $\bp(G) = n - 1$.}.

\begin{theorem}
    Given a co-chordal graph $G$, $\bc(G) \geq \lceil \log_2(\bp(G) + 1) \rceil$.
\end{theorem}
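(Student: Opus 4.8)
The plan is to establish the equivalent bound $\bp(G) \leq 2^{\bc(G)} - 1$ by showing $\bp(G) \leq \mc(G^c) - 1$ and then invoking Theorem~\ref{thm:bc_log}. Since $G$ is co-chordal, $G^c$ is chordal and hence has a clique tree $\mathcal{T}_{\mathcal{K}^c}$; its vertex set is exactly the set of maximal cliques of $G^c$, so it has $\mc(G^c)$ vertices and $\mc(G^c)-1$ edges. I would feed $\mathcal{T}_{\mathcal{K}^c}$ to Algorithm~\ref{alg:biclique_partition_heuristic} and prove that its output is a biclique partition of $G$ of size $\mc(G^c)-1$; combined with $\mc(G^c) \leq 2^{\bc(G)}$ from Theorem~\ref{thm:bc_log}, this gives $\bp(G) \leq \mc(G^c)-1 \leq 2^{\bc(G)}-1$, i.e.\ $\bp(G)+1 \leq 2^{\bc(G)}$, and taking $\log_2$ and using that $\bc(G)$ is an integer yields $\lceil \log_2(\bp(G)+1)\rceil \leq \bc(G)$.

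The heart of the argument is the correctness and size of Algorithm~\ref{alg:biclique_partition_heuristic}, which I would prove by induction on $|V(\mathcal{T})|$ over the subtrees $\mathcal{T}$ arising in the recursion: the claim is that $\textsc{FindPartition}(\mathcal{T})$ is a biclique partition of $G(U)$, where $U := \bigcup_{K \in V(\mathcal{T})} V(K)$, consisting of $|V(\mathcal{T})|-1$ bicliques. The base case $|V(\mathcal{T})| \leq 1$ is trivial, since then $U$ is a clique of $G^c$ and $G(U)$ is edgeless. For the inductive step, cut an edge $e$ into components $\mathcal{T}_1, \mathcal{T}_2$ with vertex-unions $U_1, U_2$. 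The first thing to pin down is that $\MID(e) = U_1 \cap U_2$: a vertex lying in a clique of $\mathcal{T}_1$ and a clique of $\mathcal{T}_2$ must, by the clique-intersection property, lie in both endpoints of $e$ and hence in their intersection $\MID(e)$, while the reverse inclusion is immediate. Thus $L = U_1 \setminus U_2$ and $R = U_2 \setminus U_1$; since the maximal cliques of $G^c$ containing a vertex of $L$ all lie in $\mathcal{T}_1$ and those containing a vertex of $R$ all lie in $\mathcal{T}_2$, no maximal clique of $G^c$ contains one vertex from each, so (by Lemma~\ref{lm:bc_partition}, or directly) $\{L,R\}$ is a biclique subgraph of $G$, and one checks $L,R \neq \emptyset$ using maximality of the cliques in the tree. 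Finally I would show each edge $uv$ of $G(U)$ is covered exactly once by splitting on cases: if $u,v \in U_1$ then not both are in $U_1 \cap U_2$ (a clique of $G^c$, hence edgeless in $G$), so $uv$ is covered uniquely by $\textsc{FindPartition}(\mathcal{T}_1)$ by induction and by neither $\{L,R\}$ nor $\textsc{FindPartition}(\mathcal{T}_2)$; symmetrically for $u,v\in U_2$; and in the only remaining case $u\in L$, $v\in R$ (up to relabeling), where $uv$ is covered by $\{L,R\}$ alone. Since each recursive cut contributes one biclique and there are $\mc(G^c)-1$ edges in $\mathcal{T}_{\mathcal{K}^c}$, the output has the claimed size.

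I expect the main obstacle to be this inductive verification — in particular, establishing $\MID(e) = U_1 \cap U_2$ cleanly from the clique-intersection property, and then the ``exactly once'' bookkeeping, where the subtle point is that edges with both endpoints in $\MID(e)$ create no obligation because they are non-edges of $G$. Everything after that (the chain $\bp(G) \leq \mc(G^c)-1 \leq 2^{\bc(G)}-1$ and the logarithm manipulation) is routine.
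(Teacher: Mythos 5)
Your proposal is correct and follows essentially the same route as the paper: combine $\bp(G) \leq \mc(G^c) - 1$ with Theorem~\ref{thm:bc_log} to get $\bp(G)+1 \leq 2^{\bc(G)}$ and take logarithms. The only difference is that the paper obtains $\bp(G) \leq \mc(G^c) - 1$ by citing Theorems 1 and 2 of~\cite{lyu2022finding} (restated as Theorem~\ref{thm:bp_upper_bound}), whereas you prove it from scratch by an inductive analysis of Algorithm~\ref{alg:biclique_partition_heuristic}; your induction, including the identification $\MID(e)=U_1\cap U_2$ and the observation that edges inside $\MID(e)$ are non-edges of $G$, is sound.
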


\begin{proof}
    Given a co-chordal graph $G$, we know $\bp(G) \leq \mc(G^c) - 1$ by Theorems 1 and 2 of~\cite{lyu2022finding} and $\bc(G) \geq \lceil \log_2(\mc(G^c)) \rceil$ by Theorem~\ref{thm:bc_log}. Thus, $\bc(G) \geq \lceil \log_2(\bp(G) + 1) \rceil$.
\end{proof}

\begin{remark}
    If $G$ is a complete graph, then $\bc(G) = \lceil \log_2(\bp(G) + 1) \rceil$.
\end{remark}

We also want to note that $\bc(G) \geq  \log_2(\bp(G) + 1)$ implies $\bp(G) \leq 2^{\bc(G)} - 1$ if $G$ is co-chordal. It is a better bound than $\bp(G) \leq \frac{1}{2}(3^{\bc(G)} - 1)$~\cite{pinto2013biclique} for any $\bc(G) > 1$ for co-chordal graphs.

\begin{theorem}[Theorems 1 and 2~\cite{lyu2022finding}] \label{thm:bp_upper_bound}
Given a co-chordal graph $G$ and clique tree $\mathcal{T}_{\mathcal{K}^c}$ of its complement $G^c$, the output of $\Call{FindPartition}{\mathcal{T}_{\mathcal{K}^c}}$ is a biclique partition of $G$ with size $\mc(G^c) - 1$.
\end{theorem}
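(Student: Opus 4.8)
The plan is to prove both halves of the statement simultaneously by induction on the number of nodes of the clique tree. The cardinality bound is the easy half: deleting an edge from a tree on $n$ nodes leaves two trees on $n_1$ and $n_2$ nodes with $n_1,n_2\geq 1$ and $n_1+n_2=n$, so the number $T(n)$ of bicliques returned by \textsc{FindPartition} satisfies $T(n)=1+T(n_1)+T(n_2)$ with $T(1)=0$, whence $T(n)=n-1$; since every maximal clique of $G^c$ appears as a node of $\mathcal{T}_{\mathcal{K}^c}$, this is $\mc(G^c)-1$.

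For correctness I would first isolate the structural fact that carries the argument. The clique-intersection property implies that for each vertex $w$ of $G^c$, the set $T_w$ of maximal cliques containing $w$ induces a connected subtree of $\mathcal{T}_{\mathcal{K}^c}$; consequently, for any tree edge $e$ with endpoints $A$ and $B$, a vertex $w$ lies in $\MID(e)=V(A)\cap V(B)$ exactly when $T_w$ contains both $A$ and $B$ — that is, $\MID(e)$ is precisely the set of vertices whose clique-subtree straddles the cut at $e$. To make the recursion legitimate I would carry an auxiliary invariant through the induction: a subtree $\mathcal{T}'$ of $\mathcal{T}_{\mathcal{K}^c}$, together with the vertex set $A'=\bigcup_{K\in\mathcal{T}'}V(K)$ it spans, satisfies the clique-intersection property and has the property that every edge of $G^c(A')$ lies inside some clique of $\mathcal{T}'$. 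Clique-intersection is inherited verbatim; the covering property is what needs checking, and it again follows from clique-intersection: if $xy\in E(G^c)$ with $x,y\in A'$ were covered only by a clique on the far side of the chosen edge $e=AB$, then since $x$ and $y$ each also belong to a clique on the near side, clique-intersection forces $x,y\in V(A)$, so $xy$ is covered by $A$ on the near side after all.

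Granting the invariant, the verification splits into three points. First, each returned pair $\{L,R\}$ is a biclique of $G$: $L\cap R=\emptyset$ because a common vertex would straddle the cut and so lie in $\MID(e)$, which is deleted from both sides; and if $u\in L$, $v\in R$ had $uv\in E(G^c)$, the covering invariant would produce a clique of the current subtree containing both $u$ and $v$, and that clique, lying on one side of the cut, would force $u$ or $v$ to straddle and land in $\MID(e)$, a contradiction. Second and third, the bicliques cover $E(G)$ and do so exactly once: fixing $uv\in E(G)$, call it \emph{live} in a subtree $\mathcal{T}'$ when $u,v\in\bigcup_{K\in\mathcal{T}'}V(K)$; it is live at the root, and a case analysis on how the clique-subtrees of $u$ and $v$ (intersected with the current subtree) meet the two sides of the chosen edge — both contained in one side, split across the two sides, or one of them straddling — shows that at each step a live edge is either covered by the biclique produced at that step, after which it is never live again, or stays live in exactly one of the two child subtrees. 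Since $uv\notin E(G^c)$, no single clique contains both endpoints, so $uv$ cannot be live in a one-node subtree; hence it is covered exactly once.

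I expect the main obstacle to be this last case analysis: for each arrangement of the two clique-subtrees relative to the cut one must check simultaneously whether the current-level biclique covers $uv$ and into which child (if any) liveness propagates, and the sensitive cases are exactly those in which a clique-subtree straddles the cut, so that the corresponding endpoint is absorbed into $\MID(e)$ and removed from both $L$ and $R$. Maintaining the auxiliary invariant — which is what lets the recursion be applied to a clique tree of the induced subgraph $G^c(A')$ — is the other place requiring attention, but it reduces cleanly to clique-intersection as sketched above.
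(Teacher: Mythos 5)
This theorem is imported by citation (Theorems 1 and 2 of the reference \cite{lyu2022finding}); the paper contains no proof of its own, so there is nothing to compare your argument against line by line, and it must be judged on its own. On that basis your proposal is correct and complete in all essentials: the counting recurrence $T(n)=1+T(n_1)+T(n_2)$, $T(1)=0$ gives the size $\mc(G^c)-1$; the characterization of $\MID(e)$ as the vertices whose clique-subtree straddles the cut (the induced-subtree property, equivalent to clique-intersection) is the right engine; the invariant that a subtree still covers all $G^c$-edges among its spanned vertices is exactly Lemma~\ref{lm:chordal_subtree_subgraph} of the paper; and your live-edge bookkeeping does close the loop --- in each of the three configurations (both clique-subtrees on one side, split across the cut, one straddling) the edge is either covered now and dead in both children, or uncovered now and live in exactly one child, and it cannot survive to a one-node subtree. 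Two small points you should add when writing this out. First, you need $L\neq\emptyset$ and $R\neq\emptyset$ for each returned pair to be a genuine biclique (and for no two returned bicliques to coincide, which the size count implicitly assumes); this follows because the endpoint $A$ of the cut edge lying in $\mathcal{T}_1$ satisfies $V(A)\setminus\MID(e)=V(A)\setminus V(B)\neq\emptyset$, as distinct maximal cliques are incomparable, and the nodes of every subtree remain distinct maximal cliques of $G^c$. Second, the ``covered exactly once'' conclusion also needs the observation that an edge not live in a subtree can never be covered by any biclique produced inside that subtree, since those bicliques only use vertices spanned by it; you use this implicitly in the split case and it is worth one sentence.
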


Since a biclique partition is also a biclique cover, we can get Corollary~\ref{cor:bc_upper_cochordal} immediately.

\begin{corollary} \label{cor:bc_upper_cochordal}
Given a co-chordal graph $G$, $\bc(G) \leq \mc(G^c) - 1$.
\end{corollary}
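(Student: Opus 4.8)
The plan is to derive this as an immediate consequence of Theorem~\ref{thm:bp_upper_bound}. First I would recall the elementary fact that every biclique partition of a graph is in particular a biclique cover of that graph: if a collection of bicliques covers each edge exactly once, then in particular it covers each edge at least once. Hence $\bc(G) \leq \bp(G)$ for every graph $G$, since the minimum biclique cover can be no larger than any particular biclique partition.

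Next I would invoke Theorem~\ref{thm:bp_upper_bound}: since $G$ is co-chordal, $G^c$ is chordal and therefore admits a clique tree $\mathcal{T}_{\mathcal{K}^c}$. Running $\Call{FindPartition}{\mathcal{T}_{\mathcal{K}^c}}$ produces a biclique partition of $G$ of size exactly $\mc(G^c) - 1$, so $\bp(G) \leq \mc(G^c) - 1$. Combining the two inequalities gives $\bc(G) \leq \bp(G) \leq \mc(G^c) - 1$, which is the desired bound.

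There is essentially no obstacle here; the only point that needs a line of justification is the inclusion "biclique partition $\Rightarrow$ biclique cover," which follows directly from the definitions in Section~\ref{sec:pre_bc}. Everything else is a citation of the already-stated Theorem~\ref{thm:bp_upper_bound}.
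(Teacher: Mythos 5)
Your proposal is correct and matches the paper's argument exactly: the paper also deduces the corollary immediately from Theorem~\ref{thm:bp_upper_bound} via the observation that a biclique partition is a biclique cover. Nothing further is needed.
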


We want to improve the upper bound of $\mc(G^c) - 1$. Here is an example as a motivation. Suppose that we are interested in finding a minimum biclique cover of a graph shown in Figure~\ref{fig:co-path-5}. A biclique cover can be found by Algorithm~\ref{alg:biclique_partition_heuristic}. Note that the edge can be selected arbitrarily in Line~\ref{ln:bc_partition_sep_edge} of Algorithm~\ref{alg:biclique_partition_heuristic}. We select the one that can cut the clique tree most balanced in our example.

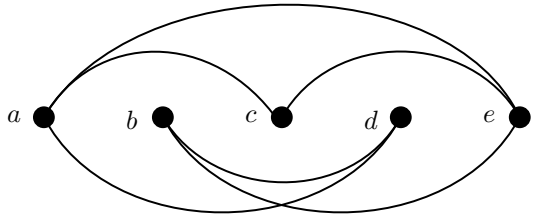
\begin{figure}[H]
    \centering
\tikzset{every picture/.style={line width=0.75pt}} 
\begin{tikzpicture}[x=0.75pt,y=0.75pt,yscale=-1,xscale=1]

\draw  [fill={rgb, 255:red, 0; green, 0; blue, 0 }  ,fill opacity=1 ] (40,165) .. controls (40,162.24) and (42.24,160) .. (45,160) .. controls (47.76,160) and (50,162.24) .. (50,165) .. controls (50,167.76) and (47.76,170) .. (45,170) .. controls (42.24,170) and (40,167.76) .. (40,165) -- cycle ;
\draw  [fill={rgb, 255:red, 0; green, 0; blue, 0 }  ,fill opacity=1 ] (280,165) .. controls (280,162.24) and (282.24,160) .. (285,160) .. controls (287.76,160) and (290,162.24) .. (290,165) .. controls (290,167.76) and (287.76,170) .. (285,170) .. controls (282.24,170) and (280,167.76) .. (280,165) -- cycle ;
\draw  [fill={rgb, 255:red, 0; green, 0; blue, 0 }  ,fill opacity=1 ] (160,165) .. controls (160,162.24) and (162.24,160) .. (165,160) .. controls (167.76,160) and (170,162.24) .. (170,165) .. controls (170,167.76) and (167.76,170) .. (165,170) .. controls (162.24,170) and (160,167.76) .. (160,165) -- cycle ;
\draw  [fill={rgb, 255:red, 0; green, 0; blue, 0 }  ,fill opacity=1 ] (220,165) .. controls (220,162.24) and (222.24,160) .. (225,160) .. controls (227.76,160) and (230,162.24) .. (230,165) .. controls (230,167.76) and (227.76,170) .. (225,170) .. controls (222.24,170) and (220,167.76) .. (220,165) -- cycle ;
\draw  [fill={rgb, 255:red, 0; green, 0; blue, 0 }  ,fill opacity=1 ] (100,165) .. controls (100,162.24) and (102.24,160) .. (105,160) .. controls (107.76,160) and (110,162.24) .. (110,165) .. controls (110,167.76) and (107.76,170) .. (105,170) .. controls (102.24,170) and (100,167.76) .. (100,165) -- cycle ;
\draw    (45,165) .. controls (70,121.25) and (130.5,119.75) .. (163.5,167.25) ;
\draw    (165,165) .. controls (190.5,120.25) and (260,121.25) .. (285,165) ;
\draw    (105,165) .. controls (130.5,207.75) and (200,209.75) .. (225,165) ;
\draw    (45,165) .. controls (91,88.75) and (249,89.75) .. (285,165) ;
\draw    (45,165) .. controls (78.5,228.75) and (190.5,229.25) .. (225,165) ;
\draw    (105,165) .. controls (138.5,228.75) and (250.5,229.25) .. (285,165) ;

\draw (25,160) node [anchor=north west][inner sep=0.75pt]    {$\displaystyle a$};
\draw (85,160) node [anchor=north west][inner sep=0.75pt]    {$\displaystyle b$};
\draw (145,160) node [anchor=north west][inner sep=0.75pt]    {$\displaystyle c$};
\draw (205,160) node [anchor=north west][inner sep=0.75pt]    {$\displaystyle d$};
\draw (265,160) node [anchor=north west][inner sep=0.75pt]    {$\displaystyle e$};

\end{tikzpicture}
    \caption{A co-chordal graph $G$ where $G^c$ is a path graph.}
    \label{fig:co-path-5}
\end{figure}

The procedure to obtain a biclique cover of the graph in Figure~\ref{fig:co-path-5} is demonstrated in Figure~\ref{fig:co-path-5-procedure}. The biclique cover is $\{\{\{a, b\}, \{c, d\}\}, \{\{a\}, \{c\}\}, \{\{c\}, \{e\}\}\}$ with a size 3. However, it is not hard to see that we can merge the biclique $\{\{a\}, \{c\}\}$ with $\{\{c\}, \{e\}\}$ to build a larger biclique $\{\{a, e\}, \{c\}\}$ to obtain a new biclique cover with a size of 2. Note that this biclique cover is the minimum by Theorem~\ref{thm:bc_log}. Also note that the size of the biclique cover is equal to the number of ``levels" minus 1 in this example: we merged all the bicliques in the same ``level" into one biclique and the last level does not contain any biclique.  

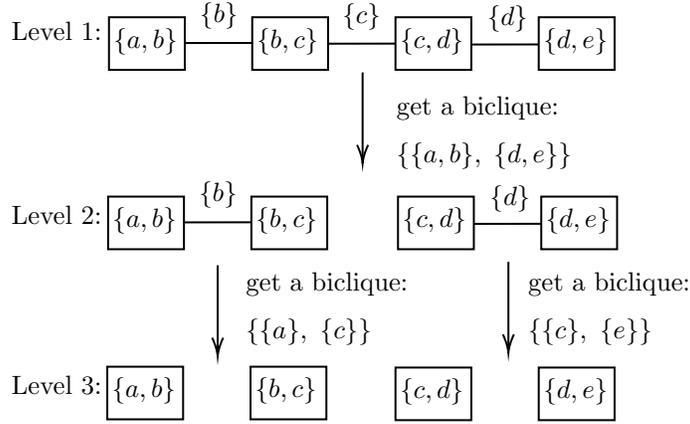
\begin{figure}[H]
    \centering
\tikzset{every picture/.style={line width=0.75pt}} 

\begin{tikzpicture}[x=0.75pt,y=0.75pt,yscale=-0.85,xscale=0.85]
\draw   (59.5,55) -- (104.5,55) -- (104.5,86.25) -- (59.5,86.25) -- cycle ;
\draw    (104.3,70.3) -- (144.5,70.25) ;
\draw   (144.5,55) -- (189.5,55) -- (189.5,86.25) -- (144.5,86.25) -- cycle ;
\draw   (229.5,55) -- (274.5,55) -- (274.5,86.25) -- (229.5,86.25) -- cycle ;
\draw   (314.5,55) -- (359.5,55) -- (359.5,86.25) -- (314.5,86.25) -- cycle ;
\draw    (189.3,70.7) -- (229.5,70.65) ;
\draw    (274.5,71.19) -- (314.7,71.14) ;
\draw    (210,87.75) -- (210,139.75) ;
\draw [shift={(210,141.75)}, rotate = 270] [color={rgb, 255:red, 0; green, 0; blue, 0 }  ][line width=0.75]    (10.93,-3.29) .. controls (6.95,-1.4) and (3.31,-0.3) .. (0,0) .. controls (3.31,0.3) and (6.95,1.4) .. (10.93,3.29)   ;
\draw   (59,161.5) -- (104,161.5) -- (104,192.75) -- (59,192.75) -- cycle ;
\draw    (103.8,176.8) -- (144,176.75) ;
\draw   (144,161.5) -- (189,161.5) -- (189,192.75) -- (144,192.75) -- cycle ;
\draw   (58.5,262.5) -- (103.5,262.5) -- (103.5,293.75) -- (58.5,293.75) -- cycle ;
\draw   (143.5,262.5) -- (188.5,262.5) -- (188.5,293.75) -- (143.5,293.75) -- cycle ;
\draw    (124,202.25) -- (124,254.25) ;
\draw [shift={(124,256.25)}, rotate = 270] [color={rgb, 255:red, 0; green, 0; blue, 0 }  ][line width=0.75]    (10.93,-3.29) .. controls (6.95,-1.4) and (3.31,-0.3) .. (0,0) .. controls (3.31,0.3) and (6.95,1.4) .. (10.93,3.29)   ;
\draw   (231,161.5) -- (276,161.5) -- (276,192.75) -- (231,192.75) -- cycle ;
\draw   (316,161.5) -- (361,161.5) -- (361,192.75) -- (316,192.75) -- cycle ;
\draw    (276,177.69) -- (316.2,177.64) ;
\draw   (229.5,263) -- (274.5,263) -- (274.5,294.25) -- (229.5,294.25) -- cycle ;
\draw   (314.5,263) -- (359.5,263) -- (359.5,294.25) -- (314.5,294.25) -- cycle ;
\draw    (296.5,200.25) -- (296.5,252.25) ;
\draw [shift={(296.5,254.25)}, rotate = 270] [color={rgb, 255:red, 0; green, 0; blue, 0 }  ][line width=0.75]    (10.93,-3.29) .. controls (6.95,-1.4) and (3.31,-0.3) .. (0,0) .. controls (3.31,0.3) and (6.95,1.4) .. (10.93,3.29)   ;

\draw (60.5,59.4) node [anchor=north west][inner sep=0.75pt]    {$\{a,b\} \ $};
\draw (111.78,44.19) node [anchor=north west][inner sep=0.75pt]    {$\{b\} \ $};
\draw (145.5,59.4) node [anchor=north west][inner sep=0.75pt]    {$\{b,c\} \ $};
\draw (230.5,59.4) node [anchor=north west][inner sep=0.75pt]    {$\{c,d\} \ $};
\draw (315.5,59.4) node [anchor=north west][inner sep=0.75pt]    {$\{d,e\} \ $};
\draw (196.28,45.19) node [anchor=north west][inner sep=0.75pt]    {$\{c\} \ $};
\draw (282.28,46.19) node [anchor=north west][inner sep=0.75pt]    {$\{d\} \ $};
\draw (60,165.9) node [anchor=north west][inner sep=0.75pt]    {$\{a,b\} \ $};
\draw (111.28,150.69) node [anchor=north west][inner sep=0.75pt]    {$\{b\} \ $};
\draw (145,165.9) node [anchor=north west][inner sep=0.75pt]    {$\{b,c\} \ $};
\draw (59.5,266.9) node [anchor=north west][inner sep=0.75pt]    {$\{a,b\} \ $};
\draw (144.5,266.9) node [anchor=north west][inner sep=0.75pt]    {$\{b,c\} \ $};
\draw (232,165.9) node [anchor=north west][inner sep=0.75pt]    {$\{c,d\} \ $};
\draw (317,165.9) node [anchor=north west][inner sep=0.75pt]    {$\{d,e\} \ $};
\draw (283.78,152.69) node [anchor=north west][inner sep=0.75pt]    {$\{d\} \ $};
\draw (228.5,100) node [anchor=north west][inner sep=0.75pt]   [align=left] {get a biclique: \\
$\{\{a,b\} ,\ \{d,e\}\}$};
\draw (230.5,267.4) node [anchor=north west][inner sep=0.75pt]    {$\{c,d\} \ $};
\draw (315.5,267.4) node [anchor=north west][inner sep=0.75pt]    {$\{d,e\} \ $};
\draw (139,205) node [anchor=north west][inner sep=0.75pt]   [align=left] {get a biclique: \\
$\{\{a\} ,\ \{c\}\}$};
\draw (306.5, 205) node [anchor=north west][inner sep=0.75pt]   [align=left] {get a biclique: \\
$\{\{c\} ,\ \{e\}\}$};

\draw (0,56) node [anchor=north west][inner sep=0.75pt]   [align=left] {Level 1:};
\draw (0,165) node [anchor=north west][inner sep=0.75pt]   [align=left] {Level 2:};
\draw (0,266) node [anchor=north west][inner sep=0.75pt]   [align=left] {Level 3:};
\end{tikzpicture}
    \caption{A procedure to obtain a biclique cover of the graph in Figure~\protect\ref{fig:co-path-5}.}
    \label{fig:co-path-5-procedure}
\end{figure}


Hence, we want to explore the relationship of ``level" and the size of minimum biclique cover of a co-chordal graph. It is worth to notice that the total ``levels" are different by selecting different edge in Line~\ref{ln:bc_partition_sep_edge} of Algorithm~\ref{alg:biclique_partition_heuristic}. Since we want to create a upper bound of the biclique cover number, we want to find the minimum ``levels". In order to describe the concept of the minimum ``levels" formally, we introduce a notation of edge-rankings for tree graphs: an edge ranking of a tree is a labeling of its edges using positive integers such that given any two edges with the same label, the unique path between those two edges has a edge with a larger label.

\begin{definition}
Given a tree $T = (V, E)$, a mapping $\varphi_T: E \rightarrow \{1, 2, \hdots, r\}$ is a edge-ranking of $T$ if for any $e_1, e_2 \in E$ with $\varphi_T(e_1) = \varphi_T(e_2)$, there exists $e_3 \in E$ on the path between $e_1$ and $e_2$ such that $\varphi_T(e_3) > \varphi_T(e_1) = \varphi_T(e_2)$. The number of the ranks used by $\varphi_T$ is $r$. An optimal edge-ranking number of $T$ is the minimum number of ranks in any edge-ranking of $T$ denoted as $\chi_r'(T)$.
\end{definition}


We first want to remark that the optimal edge-ranking of a tree $T$ has number of ranks no less than the maximum degree of $T$, i.e., $\chi_r'(T) \geq \Delta(T)$. Thus, the minimum number of ranks used in an edge-ranking of a star graph $T$ is $\Delta(T) = V(T) - 1$. Furthermore, for any nonempty tree $T$, $\chi_r'(T) \geq \lceil \log_2(|V(T)|) \rceil$.

\begin{remark}
Given an arbitrary tree $T$, $\chi_r'(T) \geq \Delta(T)$.
\end{remark}

\begin{remark}[Lemma 2.1~\cite{iyer1991edge}]
Given an arbitrary nonempty tree $T$, $\chi_r'(T) \geq \lceil \log_2(|V(T)|) \rceil$.
\end{remark}





\begin{proposition}
Given a path graph $P_n$ with $n$ vertices, $\chi_r'(P_n) = \lceil \log_2(n) \rceil$.
\end{proposition}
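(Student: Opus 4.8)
The plan is to prove $\chi_r'(P_n) = \lceil \log_2(n) \rceil$ by establishing the two inequalities separately. The lower bound $\chi_r'(P_n) \geq \lceil \log_2(n) \rceil$ is immediate from the remark just stated (Lemma 2.1 of~\cite{iyer1991edge}), since $|V(P_n)| = n$. So the real work is the upper bound: I must exhibit an edge-ranking of $P_n$ using at most $\lceil \log_2(n) \rceil$ ranks.

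For the upper bound I would argue by induction on $n$ (or equivalently on the number of edges $n-1$), using a balanced divide-and-conquer construction that mirrors the recursion in Algorithm~\ref{alg:biclique_partition_heuristic}. The key step: given $P_n$ with edges $e_1, \dots, e_{n-1}$ in order, pick a middle edge $e_m$ with $m = \lceil (n-1)/2 \rceil$ (or a similar balanced choice), assign it the top rank $r = \lceil \log_2 n \rceil$, and note that deleting $e_m$ splits $P_n$ into two subpaths $P_a$ and $P_b$ with $a, b \leq \lceil n/2 \rceil$, hence $a + b = n$ and both $a,b \le \lceil n/2 \rceil$. By the induction hypothesis each subpath admits an edge-ranking with at most $\lceil \log_2 \lceil n/2 \rceil \rceil = \lceil \log_2 n \rceil - 1$ ranks; combining these two rankings (on disjoint edge sets) together with the single top rank $r$ on $e_m$ yields a labeling of all of $P_n$ using $r$ ranks. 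To check it is a valid edge-ranking: any two edges sharing a label either lie in the same subpath — in which case the induction hypothesis supplies a larger-labeled edge on the path between them, and that path stays inside the subpath — or lie in different subpaths, in which case the path between them passes through $e_m$, which carries the unique top rank $r$ (and no other edge has rank $r$, since each subpath uses only ranks $\le r-1$). The base cases $n = 1$ (no edges, $\chi_r' = 0 = \lceil \log_2 1\rceil$) and $n = 2$ (one edge, rank $1 = \lceil \log_2 2 \rceil$) are trivial.

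I would also want to verify the arithmetic fact that when $n \geq 2$ we have $\lceil \log_2 \lceil n/2 \rceil \rceil = \lceil \log_2 n \rceil - 1$; this is a standard identity ($\lceil \log_2 \lceil n/2 \rceil \rceil = \lceil \log_2 n \rceil - 1$ for all integers $n \ge 2$), and it is what makes the rank count come out exactly right rather than off by one. With both bounds in hand, $\lceil \log_2 n \rceil \leq \chi_r'(P_n) \leq \lceil \log_2 n \rceil$ gives equality.

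The main obstacle is really just being careful about the balanced split: one must choose the cut edge so that \emph{both} resulting subpaths have at most $\lceil n/2 \rceil$ vertices (not, say, one subpath of $n-1$ vertices), so that the induction hypothesis applies with the reduced rank budget. Everything else — the validity check of the edge-ranking and the base cases — is routine. I expect the write-up to be short, with the induction and the one logarithm identity doing all the work; there is no genuine combinatorial difficulty, which is consistent with this proposition being a warm-up for the later, harder edge-ranking results about clique trees.
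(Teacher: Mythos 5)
Your proposal is correct and follows essentially the same route as the paper: the lower bound from the preceding remark (Lemma 2.1 of Iyer et al.) combined with an inductive balanced-split construction that places the top rank $\lceil \log_2 n\rceil$ on a middle edge and recurses on the two subpaths $P_{\lfloor n/2\rfloor}$ and $P_{\lceil n/2\rceil}$, using the identity $\lceil \log_2\lceil n/2\rceil\rceil = \lceil\log_2 n\rceil - 1$. Your write-up is in fact slightly more careful than the paper's, which leaves the validity check of the combined ranking and the appeal to the lower-bound remark implicit.
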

\begin{proof}
We prove it by induction. In base step, $\chi_r'(P_1) = 0$ since $P_1$ doesn't have any edge. In induction step, assume that $\chi_r'(P_i) = \lceil \log_2(i) \rceil$ and $\varphi_{P_i}$ be an optimal edge-ranking of $P_i$ for all $i = 1, 2, \hdots, n-1$. Then, we can construct an edge-ranking of $P_n$ by using $\varphi_{P_{\lfloor n/2 \rfloor}}$ and $\varphi_{P_{\lceil n/2 \rceil}}$ and use an edge with label
\begin{align*}
    \lceil \log_2(\lceil n/2 \rceil) \rceil + 1 = \lceil \log_2(n / 2) \rceil + 1 = \lceil \log_2(n) \rceil.
\end{align*}

Thus, $\chi_r'(P_n) = \lceil \log_2(n) \rceil$. Note that $\lceil \log_2(\lceil n/2 \rceil) \rceil = \lceil \log_2(n / 2) \rceil$ for any positive integer $n > 1$.
\end{proof}

We will design a subroutine in Algorithm~\ref{alg:sub_find} such that the edge selection in Algorithm~\ref{alg:biclique_partition_heuristic} is according to an edge-ranking and the bicliques are also stored based on the edge-ranking.



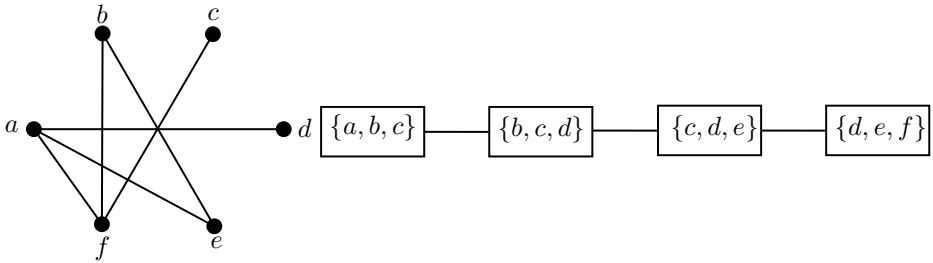
\begin{figure}[H]
    \centering
    \begin{minipage}{0.34\textwidth}
    \centering
    \tikzset{every picture/.style={line width=0.75pt}} 
    \begin{tikzpicture}[x=0.75pt,y=0.75pt,yscale=-0.7,xscale=0.7]
    
    \draw  [fill={rgb, 255:red, 0; green, 0; blue, 0 }  ,fill opacity=1 ] (40,165) .. controls (40,162.24) and (42.24,160) .. (45,160) .. controls (47.76,160) and (50,162.24) .. (50,165) .. controls (50,167.76) and (47.76,170) .. (45,170) .. controls (42.24,170) and (40,167.76) .. (40,165) -- cycle ;
    \draw  [fill={rgb, 255:red, 0; green, 0; blue, 0 }  ,fill opacity=1 ] (170,235) .. controls (170,232.24) and (172.24,230) .. (175,230) .. controls (177.76,230) and (180,232.24) .. (180,235) .. controls (180,237.76) and (177.76,240) .. (175,240) .. controls (172.24,240) and (170,237.76) .. (170,235) -- cycle ;
    \draw  [fill={rgb, 255:red, 0; green, 0; blue, 0 }  ,fill opacity=1 ] (169,96.5) .. controls (169,93.74) and (171.24,91.5) .. (174,91.5) .. controls (176.76,91.5) and (179,93.74) .. (179,96.5) .. controls (179,99.26) and (176.76,101.5) .. (174,101.5) .. controls (171.24,101.5) and (169,99.26) .. (169,96.5) -- cycle ;
    \draw  [fill={rgb, 255:red, 0; green, 0; blue, 0 }  ,fill opacity=1 ] (220,165) .. controls (220,162.24) and (222.24,160) .. (225,160) .. controls (227.76,160) and (230,162.24) .. (230,165) .. controls (230,167.76) and (227.76,170) .. (225,170) .. controls (222.24,170) and (220,167.76) .. (220,165) -- cycle ;
    \draw  [fill={rgb, 255:red, 0; green, 0; blue, 0 }  ,fill opacity=1 ] (89.5,96) .. controls (89.5,93.24) and (91.74,91) .. (94.5,91) .. controls (97.26,91) and (99.5,93.24) .. (99.5,96) .. controls (99.5,98.76) and (97.26,101) .. (94.5,101) .. controls (91.74,101) and (89.5,98.76) .. (89.5,96) -- cycle ;
    \draw  [fill={rgb, 255:red, 0; green, 0; blue, 0 }  ,fill opacity=1 ] (89,233.5) .. controls (89,230.74) and (91.24,228.5) .. (94,228.5) .. controls (96.76,228.5) and (99,230.74) .. (99,233.5) .. controls (99,236.26) and (96.76,238.5) .. (94,238.5) .. controls (91.24,238.5) and (89,236.26) .. (89,233.5) -- cycle ;
    \draw    (45,165) -- (225,165) ;
    \draw    (45,165) -- (175,235) ;
    \draw    (45,165) -- (94,233.5) ;
    \draw    (94.5,96) -- (175,235) ;
    \draw    (94.5,96) -- (94,233.5) ;
    \draw    (174,96.5) -- (94,233.5) ;
    
    \draw (22,157) node [anchor=north west][inner sep=0.75pt]    {$a$};
    \draw (88,73) node [anchor=north west][inner sep=0.75pt]    {$b$};
    \draw (168,76) node [anchor=north west][inner sep=0.75pt]    {$c$};
    \draw (233,155) node [anchor=north west][inner sep=0.75pt]    {$d$};
    \draw (170,240) node [anchor=north west][inner sep=0.75pt]    {$e$};
    \draw (86.5,240) node [anchor=north west][inner sep=0.75pt]    {$f$};

    \end{tikzpicture}
    \end{minipage}
    \begin{minipage}{0.65\textwidth}
    \centering
    \tikzset{every picture/.style={line width=0.75pt}} 
    \begin{tikzpicture}[x=0.75pt,y=0.75pt,yscale=-0.8,xscale=0.8]

\draw   (20.5,147) -- (85.5,147) -- (85.5,178.25) -- (20.5,178.25) -- cycle ;
\draw   (126.5,147) -- (191.5,147) -- (191.5,178.25) -- (126.5,178.25) -- cycle ;
\draw    (85.3,162.7) -- (126,162.75) ;
\draw   (232.9,146.2) -- (297.9,146.2) -- (297.9,177.45) -- (232.9,177.45) -- cycle ;
\draw    (191.7,161.9) -- (232.4,161.95) ;
\draw   (338.9,146.2) -- (403.9,146.2) -- (403.9,177.45) -- (338.9,177.45) -- cycle ;
\draw    (297.7,161.9) -- (338.4,161.95) ;

\draw (24,149.7) node [anchor=north west][inner sep=0.75pt]    {$\{a,b,c\} \ $};
\draw (130.1,150.9) node [anchor=north west][inner sep=0.75pt]    {$\{b,c,d\} \ $};
\draw (239.1,150.1) node [anchor=north west][inner sep=0.75pt]    {$\{c,d,e\} \ $};
\draw (342.5,150.5) node [anchor=north west][inner sep=0.75pt]    {$\{d,e,f\} \ $};

\end{tikzpicture}
    \end{minipage}
    \caption{A co-chordal graph $G$ (left) and a clique tree of $G^c$ (right).}
    \label{fig:co_chordal_counter}
\end{figure}

Our first conjecture is that given an arbitrary co-chordal graph $G$ and a clique tree $\mathcal{T}_{\mathcal{K}^c}$ of $G^c$, $\bc(G) \leq \chi_r'(\mathcal{T}_{\mathcal{K}^c})$. However, it is not correct. As shown in Figure~\ref{fig:co_chordal_counter} (right), the optimal edge-ranking of the clique tree uses 2 ranks since the clique tree is a path with 4 vertices. However, the biclique cover number of the graph $G$ in Figure~\ref{fig:co_chordal_counter} (left) is 3. A biclique cover of $G$ contains 3 bicliques $\{\{a, b\}, \{e, f\}\}, \{\{a\}, \{d\}\}, \{\{c\}, \{f\}\}$. It is also easy to check the size of the biclique cover of $G$ is at least 3 since any pair of edges $ad$, $be$, $cf$ cannot appear in the same biclique. 

\begin{algorithm}[H]
\begin{algorithmic}[1]
\Function{FindBiclique}{$\mathcal{T}, \varphi, \sigma, r, \bc$}
\If{$|V(\mathcal{T})| \leq 1$}
\State \textbf{return}
\EndIf
\State Select the edge $e$ such that $e = \argmax\{\varphi(e'): e' \in \mathcal{T}\}$.
\State Let $\mathcal{T}_1$ and $\mathcal{T}_2$ be the two subtrees of $\mathcal{T} \setminus e$.
\State $\level \leftarrow r + 1 - \varphi(e)$
\State If $\bc[\level]$ is not initialized, $\bc[\level] = ()$.
\State $\ord \leftarrow \min\{\sigma(v): v \in V(\mathcal{T})\}$ \Comment{In order to sort biclique within $\bc[\level]$.}
\State $L \leftarrow \bigcup_{K \in V(\mathcal{T}_1)} V(K) \setminus \MID(e)$; $R \leftarrow \bigcup_{K \in V(\mathcal{T}_2)} V(K) \setminus \MID(e)$
\State Add $(\{L, R\}, \ord)$ into $\bc[\level]$
\State $\Call{FindBiclique}{\mathcal{T}_1, \varphi, \sigma, r, \bc}$
\State $\Call{FindBiclique}{\mathcal{T}_2, \varphi, \sigma, r, \bc}$
\State \textbf{return} 
\EndFunction
\end{algorithmic}
\caption{A subroutine FindBiclique($\mathcal{T}, \varphi, \sigma, r, \bc$)} 
\label{alg:sub_find}
\end{algorithm}

Thus, in general, we cannot say that $\bc(G) \leq \chi_r'(\mathcal{T}_{\mathcal{K}^c})$ given a co-chordal graph $G$ and a clique tree $\mathcal{T}_{\mathcal{K}^c}$ of $G^c$. Thus, $\Call{MergeBiclique}{\bc_{\level}[i], G}$ in Algorithm~\ref{alg:biclique_tot_bc} might return a set with more than one bicliques if $G$ is co-chordal. However, we can show that if $G$ is co-chordal such that each vertex is in at most two maximal independent sets, then $\Call{MergeBiclique}{\bc_{\level}[i], G}$ in Algorithm~\ref{alg:biclique_tot_bc} will always return a set with one biclique, which leads to $\bc(G) \leq \chi_r'(\mathcal{T}_{\mathcal{K}^c})$, where $\mathcal{T}_{\mathcal{K}^c}$ is a clique tree of $G^c$.

\begin{algorithm}[H]
\begin{algorithmic}[1]
\State \textbf{Input}: A co-chordal graph $G$.
\State \textbf{Output}: A biclique cover $\bc$ of the complementary graph $G$ of $G^c$.
\State Initialize a dictionary $\bc_{\level} \leftarrow \{\}$.
\State Compute a clique tree $\mathcal{T}_{\mathcal{K}^c}$ of a chordal graph $G^c$ by Algorithm~\ref{alg:mcs_clique_tree} as shown in~\ref{sec:clique_tree_appendix}.
\State Find an optimal edge-ranking $\varphi$ of $\mathcal{T}_{\mathcal{K}^c}$ by algorithm EDGE-RANKING in~\cite{lam2001optimal} and record the number of ranks of $\varphi$, $r$.
\State Find an ordering $\sigma$ of $\mathcal{T}_{\mathcal{K}^c}$ by traversing all vertices using breadth-first search from an arbitrary leaf node.
\State $\Call{FindBiclique}{\mathcal{T}_{\mathcal{K}^c}, \varphi, \sigma, r, \bc_{\level}}$. \label{ln:FindBiclique}
\State $\bc \leftarrow \{\}$
\For{$i \in \llbracket r \rrbracket$}
\State $\bc \leftarrow \bc \cup \Call{MergeBiclique}{\bc_{\level}[i], G}$.
\EndFor
\end{algorithmic}
\caption{Find a biclique cover of a co-chordal graph $G$ with a size at most $\mc(G^c) - 1$.} \label{alg:biclique_tot_bc}
\end{algorithm}

\begin{lemma} \label{lm:chordal_subtree_subgraph}
    Given a chordal graph $G^c$, a clique tree $\mathcal{T}_{\mathcal{K}^c}$ of $G^c$, and an arbitrary nonempty subtree $\mathcal{T}'$ of $\mathcal{T}_{\mathcal{K}^c}$ with $\mathcal{K}' = V(\mathcal{T}')$, then we can construct a graph $G' = (V', E')$ such that the vertex set $V'= \bigcup_{K \in \mathcal{K}'} K$ and the edge set $E' = \{uv: \exists K \in \mathcal{K}' \text{ such that} \{u, v\} \subseteq K, u \neq v\}$. Then, $G'$ is an induced subgraph of $G^c$. Furthermore, $\mathcal{T}'$ is a clique tree of $G'$.
\end{lemma}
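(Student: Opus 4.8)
The plan is to prove the two assertions in turn, relying on two classical structural facts about clique trees that both follow directly from the clique-intersection property. First: for any vertex $w$ of $G^c$, the set $T_w := \{K \in \mathcal{K}_{G^c} : w \in K\}$ induces a subtree of $\mathcal{T}_{\mathcal{K}^c}$ — indeed, if $w \in K^1 \cap K^2$ then the clique-intersection property forces every clique on the $K^1$–$K^2$ path to contain $w$, so $T_w$ is path-closed, hence connected. Second: subtrees of a tree have the Helly property, and in particular any three vertices $x,y,z$ of a tree admit a common median $m$ lying on all three of the paths $xy$, $yz$, $xz$. I will also use throughout that, since $\mathcal{T}'$ is a connected subgraph of $\mathcal{T}_{\mathcal{K}^c}$, the path joining any two cliques of $\mathcal{K}' = V(\mathcal{T}')$ inside $\mathcal{T}'$ coincides with the path joining them inside $\mathcal{T}_{\mathcal{K}^c}$.

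First I would show $G' = G^c(V')$. The inclusion $E' \subseteq E(G^c)$ is immediate since each $K \in \mathcal{K}'$ is a clique of $G^c$, and every $uv \in E'$ plainly has $u,v \in V'$. For the converse, take distinct $u, v \in V'$ with $uv \in E(G^c)$; pick $K^u, K^v \in \mathcal{K}'$ with $u \in K^u$, $v \in K^v$, and pick a maximal clique $K$ of $G^c$ with $\{u,v\} \subseteq K$. Let $m$ be the median of $K^u, K^v, K$ in $\mathcal{T}_{\mathcal{K}^c}$. Since $m$ lies on the $K^u$–$K$ path, which is contained in the subtree $T_u$, we get $u \in m$; symmetrically $v \in m$; and since $m$ lies on the $K^u$–$K^v$ path, which is contained in $\mathcal{T}'$, we get $m \in \mathcal{K}'$. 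Hence $\{u,v\} \subseteq m$ with $m \in \mathcal{K}'$, so $uv \in E'$. This gives $E' = \{uv \in E(G^c) : u, v \in V'\}$, i.e. $G'$ is the induced subgraph $G^c(V')$.

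Next I would verify the three requirements for $\mathcal{T}'$ to be a clique tree of $G'$. Maximality of each $K \in \mathcal{K}'$ in $G'$: $K$ is a clique of $G'$ by definition of $E'$, and if it extended to a larger clique inside $V'$, that larger set would be a clique of $G^c$ (as $G'$ is an induced subgraph of $G^c$), contradicting maximality of $K$ in $G^c$. Conversely, let $C$ be a maximal clique of $G'$; for each $v \in C$ the set $T'_v := T_v \cap \mathcal{K}'$ is a nonempty subtree of $\mathcal{T}'$, and for $u, v \in C$ we have $T'_u \cap T'_v \neq \emptyset$ because $uv \in E'$ yields a clique of $\mathcal{K}'$ containing both. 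By the Helly property there is $K^* \in \mathcal{K}'$ with $C \subseteq K^*$; since $K^*$ is a clique of $G'$ and $C$ is maximal, $C = K^* \in \mathcal{K}'$. Finally, the clique-intersection property for $\mathcal{T}'$ with respect to $G'$ is inherited verbatim from that of $\mathcal{T}_{\mathcal{K}^c}$: for $K^1, K^2 \in \mathcal{K}'$ the $\mathcal{T}'$-path between them equals the $\mathcal{T}_{\mathcal{K}^c}$-path, every clique of which contains $K^1 \cap K^2$. Together with the obvious bijection between $V(\mathcal{T}')$ and the maximal cliques of $G'$, this shows $\mathcal{T}'$ is a clique tree of $G'$.

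The main obstacle is the forward direction of the first claim — producing a single clique of $\mathcal{K}'$ that contains a prescribed edge $uv$ of $G^c$ with both endpoints in $V'$ — together with the parallel step that every maximal clique of $G'$ already occurs in $\mathcal{K}'$; both hinge on the interplay between the vertex-subtree property and the Helly/median property of subtrees of a tree. Once those tree facts are in place, the remaining work is routine bookkeeping: the easy inclusions, the maximality argument via the induced-subgraph relation, and the free inheritance of the clique-intersection property.
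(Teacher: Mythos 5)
Your proof is correct, and it follows the same overall route as the paper for the main claim (identifying $G'$ with the induced subgraph $G^c(V')$), but it is noticeably more careful at the two places where the paper is loose. For the reverse inclusion $E(G^c(V')) \subseteq E'$, the paper asserts that the path between $K^u$ and $K^v$ ``must contain $K^{uv}$,'' which is not literally forced by the clique-intersection property: the path need only contain \emph{some} clique containing both $u$ and $v$, not that particular maximal clique. Your median argument (taking the median $m$ of $K^u$, $K^v$, $K$ and showing $u,v \in m$ via connectedness of the vertex-subtrees $T_u$, $T_v$, and $m \in \mathcal{K}'$ via connectedness of $\mathcal{T}'$) is the correct rigorous version of exactly this step and reaches the same contradiction. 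For the second claim, the paper dismisses ``$\mathcal{K}'$ is the set of all maximal cliques of $G'$'' as not hard to see; your Helly-property argument (the subtrees $T_v \cap \mathcal{T}'$ for $v$ in a maximal clique $C$ of $G'$ pairwise intersect, hence have a common vertex $K^*$ with $C \subseteq K^*$) supplies the missing half of that equivalence, and the inheritance of the clique-intersection property is handled identically in both. In short: same skeleton, but your write-up patches a minor imprecision and fills a gap the paper leaves to the reader.
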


\begin{proof}
    Let $G'' = G^c[V']$ be an induced subgraph of $G^c$ and $E''$ be the edge set of $G''$. Then, it is not hard to see that $E' \subseteq E''$ since $uv$ is an edge in $G^c$ and $u, v \in V'$ for any $uv \in E'$.

    Assume that $E'' \not\subseteq E'$. Then, there must exist $uv \in E''$ such that $\{u, v\} \not\subseteq K$ for any $K \in \mathcal{K}'$. Then, there must exists a maximal clique $K^{uv}$ of $G^c$ such that $\{u, v\} \in K^{u,v}$. Since $u, v \in V'$, then there must exist $K^u, K^v \in \mathcal{K}'$ such that $u \in K^u$ and $v \in K^v$. Since $\mathcal{T}_{\mathcal{K}^c}$ is a clique tree, then the unique path between $K^u$ and $K^v$ must contain $K^{uv}$. Otherwise, the clique-intersection property will not be satisfied. It is a contradiction since $K^{uv}$ is not in $\mathcal{K}'$ so that $\mathcal{T}'$ is not a tree. Hence, $G'$ is an induced subgraph of $G^c$. It is also not hard to see that $\mathcal{K}'$ is a set of all maximal clique of $G'$. Thus, $\mathcal{T}'$ is a clique tree of $G'$.
\end{proof}

\begin{algorithm}[H]
\begin{algorithmic}[1]
\Function{MergeBiclique}{$\bc', G$}
\If{$\bc' = ()$}
\State \textbf{return} $\emptyset$
\EndIf
\State Sort the element $(\{L, R\}, \ord)$ in $\bc'$ by ascending order of $\ord$.
\State $\bc \leftarrow \emptyset$
\For{$(\{L, R\}, \ord)$ in $\bc'$}
\State $\flag \leftarrow \True$
\For{$\{L', R'\} \in \bc$}
\If{$\{L \cup L', R \cup R'\}$ is a biclique cover of $G$}
\State $L' \leftarrow L \cup L'$; $R' \leftarrow R \cup R'$; $\flag \leftarrow \False$
\ElsIf{$\{R \cup L', L \cup R'\}$ is a biclique cover of $G$}
\State $L' \leftarrow R \cup L'$; $R' \leftarrow L \cup R'$; $\flag \leftarrow \False$
\EndIf
\EndFor
\State If $\flag$, then $\bc \leftarrow \bc \cup \{\{L, R\}\}$.
\EndFor
\State \textbf{return} $\bc$
\EndFunction
\end{algorithmic}
\caption{A subroutine MergeBiclique($\bc', G$)} 
\label{alg:sub_bc}
\end{algorithm}

In order to simplify the notation, we use $G[\mathcal{T}]$ to represent the induced subgraph $G[\bigcup_{K \in V(\mathcal{T})} K]$ where $\mathcal{T}$ is a subtree of a clique tree of $G$ or $G^c$. Also, if $\mathcal{T}$ is a clique tree and $\mathcal{T}_1, \mathcal{T}_2$ are two disjointed subtrees of $\mathcal{T}$ (subtrees do not share any vertex), we denote $\mathcal{T}_1 \oplus \mathcal{T}_2$ as the subtree of $\mathcal{T}$ with the minimum size that contains both $\mathcal{T}_1$ and $\mathcal{T}_2$. In other word, $\mathcal{T}_1 \oplus \mathcal{T}_2$ contains $\mathcal{T}_1$, $\mathcal{T}_2$, and the path between $\mathcal{T}_1$ and $\mathcal{T}_2$.

\begin{proposition} \label{prop:co_chordal_merge_larger_biclique}
Suppose that we have a co-chordal graph $G$ such that each vertex is in at most two maximal independent sets, a set of all maximal cliques $\mathcal{K}^c$ of $G^c$, and a clique tree $\mathcal{T}_{\mathcal{K}^c}$ of $G^c$. Given two disjointed subtree of $\mathcal{T}_{\mathcal{K}^c}$, $\mathcal{T}_1$ and $\mathcal{T}_2$, then an arbitrary biclique subgraph of $G[\mathcal{T}_1]$, $\{L^1, R^1\}$, and an arbitrary biclique subgraph of $G[\mathcal{T}_2]$, $\{L^2, R^2\}$, can be merged to a larger biclique of $G[\mathcal{T}_1 \oplus \mathcal{T}_2]$, i.e. $\{L^1 \cup L^2, R^1 \cup R^2\}$ or $\{L^1 \cup R^2, R^1 \cup L^2\}$ is a biclique subgraph of $G[\mathcal{T}_1 \oplus \mathcal{T}_2]$.
\end{proposition}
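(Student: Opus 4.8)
The plan is to pass everything to the clique tree $\mathcal{T}_{\mathcal{K}^c}$, use Proposition~\ref{prop:bc_larger_biclique} to replace $\{L^1,R^1\}$ and $\{L^2,R^2\}$ by bicliques arising from partitions of the clique‑tree nodes, and then merge those two partitions into a single partition of the nodes of $\mathcal{T}_1\oplus\mathcal{T}_2$ whose associated biclique dominates the desired merge. Concretely, write $V_i:=\bigcup_{K\in V(\mathcal{T}_i)}K$. By Lemma~\ref{lm:chordal_subtree_subgraph}, $G^c[\mathcal{T}_i]$ is the induced subgraph $G^c[V_i]$ and its maximal cliques are exactly the nodes $V(\mathcal{T}_i)$; hence for nonempty disjoint $L,R\subseteq V_i$, the pair $\{L,R\}$ is a biclique subgraph of $G[\mathcal{T}_i]$ iff no $u\in L$, $v\in R$ lie in a common node of $\mathcal{T}_i$. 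Applying Proposition~\ref{prop:bc_larger_biclique} to $G[\mathcal{T}_i]$ yields a partition $(I_i,J_i)$ of $V(\mathcal{T}_i)$ with $L^i\subseteq\bar L^i:=\bigcup_{K\in I_i}K\setminus\bigcup_{K\in J_i}K$ and $R^i\subseteq\bar R^i:=\bigcup_{K\in J_i}K\setminus\bigcup_{K\in I_i}K$; since any pair of subsets of the two sides of a biclique is again a biclique, it suffices to merge $\{\bar L^1,\bar R^1\}$ with $\{\bar L^2,\bar R^2\}$. I will close the argument with the following restatement of Lemma~\ref{lm:bc_partition}: if $(I,J)$ partitions $V(\mathcal{T})$ and $L\subseteq\bigcup_{K\in I}K\setminus\bigcup_{K\in J}K$, $R\subseteq\bigcup_{K\in J}K\setminus\bigcup_{K\in I}K$ are nonempty, then $\{L,R\}$ is a biclique subgraph of $G[\mathcal{T}]$ (one checks directly that $L,R$ are disjoint and that any $G^c$‑edge between them would force a common node of $\mathcal{T}$ to lie in both $I$ and $J$).

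Next I would exploit the hypothesis that each vertex of $G$ lies in at most two maximal independent sets, i.e.\ each vertex of $G^c$ lies in at most two maximal cliques: the set of nodes of $\mathcal{T}_{\mathcal{K}^c}$ containing a fixed vertex is a subtree, hence is a single node or a single edge. Let $P=(A_1=P_0,P_1,\dots,P_m=A_2)$ be the path of $\mathcal{T}_{\mathcal{K}^c}$ joining $\mathcal{T}_1$ to $\mathcal{T}_2$, with $A_1\in V(\mathcal{T}_1)$, $A_2\in V(\mathcal{T}_2)$ and $P_1,\dots,P_{m-1}\notin V(\mathcal{T}_1)\cup V(\mathcal{T}_2)$, so that $\mathcal{T}_1\oplus\mathcal{T}_2=\mathcal{T}_1\cup P\cup\mathcal{T}_2$. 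From the ``at most an edge'' property I would derive: (i) any vertex of $V_1$ lying in an interior node of $P$ in fact lies in $A_1\cap P_1=\MID(A_1P_1)$, and symmetrically at the $\mathcal{T}_2$‑end; in particular $V_1\cap V_2\neq\emptyset$ only when $m=1$, and then $V_1\cap V_2\subseteq\MID(A_1A_2)$; and (ii) since $\MID(A_1P_1)\subseteq A_1$ and $A_1$ is a maximal clique of $G^c[\mathcal{T}_1]$, the whole set $\MID(A_1P_1)$ lies in $\bar L^1$ if $A_1\in I_1$ and in $\bar R^1$ if $A_1\in J_1$ (and symmetrically for $(I_2,J_2)$ at the other end).

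Now I would build the merged partition. Set $\pi_1=I$ if $A_1\in I_1$ and $\pi_1=J$ otherwise, and $\pi_2=I$ if $A_2\in I_2$ and $\pi_2=J$ otherwise. Define $(I,J)$ on $V(\mathcal{T}_1\oplus\mathcal{T}_2)$ so that its restriction to $\mathcal{T}_1$ is $(I_1,J_1)$, its restriction to $\mathcal{T}_2$ is $(I_2,J_2)$ when $\pi_1=\pi_2$ and $(J_2,I_2)$ when $\pi_1\neq\pi_2$, and every interior node of $P$ is placed in the block $\pi_1$. Using (i) and (ii) I would verify that $\bar L^1$ together with $\bar L^2$ (if $\pi_1=\pi_2$) or $\bar R^2$ (if $\pi_1\neq\pi_2$) is contained in $\bigcup_{K\in I}K\setminus\bigcup_{K\in J}K$, and the symmetric containment for the $J$‑block: by (i) the only nodes of $\mathcal{T}_1\oplus\mathcal{T}_2$ outside $\mathcal{T}_1$ that can contain a vertex of $\bar L^1\cup\bar R^1$ are $P_1$ and, when $m=1$, $A_2$, and by (ii) together with the definition of $\pi_1$ that node is assigned to $\pi_1$, i.e.\ to the same side on which $\bar L^1$ or $\bar R^1$ was placed. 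The restatement of Lemma~\ref{lm:bc_partition} then gives that $\{L^1\cup L^2,\,R^1\cup R^2\}$ (case $\pi_1=\pi_2$) or $\{L^1\cup R^2,\,R^1\cup L^2\}$ (case $\pi_1\neq\pi_2$) is a biclique subgraph of $G[\mathcal{T}_1\oplus\mathcal{T}_2]$, as claimed.

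The step I expect to be the main obstacle is the case $m=1$, where $\mathcal{T}_1$ and $\mathcal{T}_2$ are adjacent in the clique tree: there $V_1$ and $V_2$ genuinely overlap inside $\MID(A_1A_2)$, so neither candidate merge is automatically disjoint, and one must check that this shared middle set receives the same label from the $\mathcal{T}_1$‑side partition as from the $\mathcal{T}_2$‑side partition — this is precisely what dictates whether to keep or to flip the orientation of $(I_2,J_2)$ and is the content of comparing $\pi_1$ with $\pi_2$. A secondary technicality is consistency of the interior‑node labelling when $m=2$ (a single interior node must meet the constraints propagated from both ends), which is again absorbed into the same orientation choice; the cases $m\ge 3$ are easy because then $V_1\cap V_2=\emptyset$ and no $G^c$‑edge runs between $V_1$ and $V_2$ at all.
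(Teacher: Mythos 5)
Your proposal is correct and follows essentially the same route as the paper's proof: reduce both bicliques to partitions of the clique-tree nodes via Proposition~\ref{prop:bc_larger_biclique} (using Lemma~\ref{lm:chordal_subtree_subgraph}), orient the two partitions so that the boundary nodes of $\mathcal{T}_1$ and $\mathcal{T}_2$ land on the same side, assign the connecting path's nodes to that side, and invoke Lemma~\ref{lm:bc_partition} together with the fact that a vertex lying in at most two maximal cliques of $G^c$ can only belong to adjacent clique-tree nodes. The paper compresses your $\pi_1,\pi_2$ orientation bookkeeping into a single ``without loss of generality'' relabeling, but the argument is the same.
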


\begin{proof}
By Lemma~\ref{lm:chordal_subtree_subgraph}, we know that $\mathcal{T}_1$ is a clique tree of $G^c[\mathcal{T}_1]$ and $\mathcal{T}_2$ is a clique tree of $G^c[\mathcal{T}_2]$. Let $\mathcal{K}_1$ and $\mathcal{K}_2$ be the vertex sets of $\mathcal{T}_1$ and $\mathcal{T}_2$ respectively. By the definition of clique trees, $\mathcal{K}_1$ and $\mathcal{K}_2$ are also the set of all maximal cliques of $G^c[\mathcal{T}_1]$ and $G^c[\mathcal{T}_2]$. By Proposition~\ref{prop:bc_larger_biclique}, there exists a partition of $\mathcal{K}_1$, $\mathcal{K}^L_1$ and $\mathcal{K}^R_1$, such that $\{L^1, R^1\}$ is a subgraph of biclique $\left\{\bigcup_{K \in \mathcal{K}^L_1} K \setminus \bigcup_{K \in \mathcal{K}^R_1} K, \bigcup_{K \in \mathcal{K}^R_1} K \setminus \bigcup_{K \in \mathcal{K}^L_1} K\right\}$. Similarly, $\mathcal{K}^L_2$ and $\mathcal{K}^R_2$ are a partition of $\mathcal{K}_2$ such that $\{L^2, R^2\}$ is a subgraph of biclique $$\left\{\bigcup_{K \in \mathcal{K}^L_2} K \setminus \bigcup_{K \in \mathcal{K}^R_2} K, \bigcup_{K \in \mathcal{K}^R_2} K \setminus \bigcup_{K \in \mathcal{K}^L_2} K \right\}.$$ 

Let $\mathcal{P}$ be the path between $\mathcal{T}_1$ and $\mathcal{T}_2$ in $\mathcal{T}_{\mathcal{K}^c}$ and $\mathcal{K}_{\mathcal{P}}$ be the vertex set of $\mathcal{P}$. Note that $\mathcal{K}_{\mathcal{P}}$ might be empty. Then, without loss of generality, we assume that $\mathcal{K}^L_1$ contains the vertex in $\mathcal{T}_1$ that is adjacent to a vertex in $\mathcal{P} \oplus \mathcal{T}_2$ in the clique tree $\mathcal{T}_{\mathcal{K}^c}$. Similarly, we assume that $\mathcal{K}^L_2$ contains the vertex in $\mathcal{T}_2$ that is adjacent to a vertex in $\mathcal{P} \oplus \mathcal{T}_1$. Then, let $A = \bigcup_{K \in \mathcal{K}^L_1 \cup \mathcal{K}^L_2 \cup \mathcal{K}_{\mathcal{P}}} K$ and $B = \bigcup_{K \in \mathcal{K}^R_1 \cup \mathcal{K}^R_2} K$. By Lemma~\ref{lm:bc_partition}, we know that $\{A \setminus B, B \setminus A\}$ is a biclique subgraph of $G[\mathcal{T}_1 \oplus \mathcal{T}_2]$.

Since each vertex is in at most two maximal independent sets and $\mathcal{T}_{\mathcal{K}^c}$ satisfy the clique-intersection property of clique trees, then if both $u \in K$ and $u \in K'$ for $K, K' \in \mathcal{K}^c$, then $K$ and $K'$ must be adjacent in $\mathcal{T}_{\mathcal{K}^c}$. Since $\mathcal{K}^L_1$ is not empty, then 
an arbitrary vertex in $\mathcal{K}^R_1$ is not adjacent to any vertex in $\mathcal{K}^L_2 \cup \mathcal{K}_{\mathcal{P}}$. Thus, $\bigcup_{K \in \mathcal{K}^R_1} K \cap \bigcup_{K \in \mathcal{K}^L_2 \cup \mathcal{K}_{\mathcal{P}}} K = \emptyset$. Similarly, an arbitrary vertex in $\mathcal{K}^R_2$ is not adjacent to any vertex in $\mathcal{K}^L_1 \cup \mathcal{K}_{\mathcal{P}}$. Thus, $\bigcup_{K \in \mathcal{K}^R_2} K \cap \bigcup_{K \in \mathcal{K}^L_1 \cup \mathcal{K}_{\mathcal{P}}} K = \emptyset$. Then,
\begin{align*}
    &\bigcup_{K \in \mathcal{K}^L_1} K \setminus \bigcup_{K \in \mathcal{K}^R_1} K = \bigcup_{K \in \mathcal{K}^L_1} K \setminus \bigcup_{K \in \mathcal{K}^R_1 \cup \mathcal{K}^R_2} K \subseteq A \setminus B, \\
    &\bigcup_{K \in \mathcal{K}^R_1} K \setminus \bigcup_{K \in \mathcal{K}^L_1} K = \bigcup_{K \in \mathcal{K}^R_1} K \setminus \bigcup_{K \in \mathcal{K}^L_1 \cup \mathcal{K}^L_2 \cup \mathcal{K}^{\mathcal{P}}} K \subseteq B \setminus A.
\end{align*}

Hence, both $\{L^1, R^1\}$ and $\left\{\bigcup_{K \in \mathcal{K}^L_1} K \setminus \bigcup_{K \in \mathcal{K}^R_1} K, \bigcup_{K \in \mathcal{K}^R_1} K \setminus \bigcup_{K \in \mathcal{K}^L_1} K\right\}$ are biclique subgraphs of $\{A \setminus B, B \setminus A\}$. Similarly, $\{L^2, R^2\}$ is a biclique subgraph of $\{A \setminus B, B \setminus A\}$. Therefore, $\{L^1 \cup L^2, R^1 \cup R^2\}$ is a biclique subgraph of $G[\mathcal{T}_1 \oplus \mathcal{T}_2]$. 
\end{proof}

\begin{lemma} \label{lm:disjoint_seq_tree}
    Given a tree $T$ and an ordering $\sigma$ of $T$ by traversing all vertices using breadth-first search from an arbitrary leaf node, we have a sequence of mutually disjoined subtrees $\{T_i\}_{i=1}^n$ such that $\min\{\sigma(v): v \in V(T_i)\} < \min\{\sigma(v): v \in V(T_j)\}$ for integers $1 \leq i < j \leq n$. Then, $T_1 \oplus \hdots \oplus T_{i-1}$ and $T_j$ are also disjointed for any integer $2 \leq i \leq j \leq n$.
\end{lemma}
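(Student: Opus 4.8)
The plan is to root the tree $T$ at the leaf $\rho$ at which the breadth-first search starts, and to convert the hypothesis on $\min_{v \in V(T_i)}\sigma(v)$ into a statement about ancestors in this rooted tree. Two preliminary facts drive everything. First, in any BFS order every non-root vertex is labelled after its parent, so $\sigma$ strictly increases along every root-to-leaf path; in particular a proper ancestor always has a strictly smaller $\sigma$-value than any of its descendants. Second, for every connected subgraph $S$ of $T$ the vertex $r(S)$ realizing $\min_{v\in V(S)}\sigma(v)$ is a common ancestor in $T$ of all vertices of $S$: the minimum-depth vertex of $S$ is unique, since if two vertices of $S$ shared the minimum depth then the $T$-path between them (contained in $S$ because $S$ is connected) would pass through a strictly shallower common ancestor; that vertex is an ancestor of every vertex of $S$, again by connectedness of $S$; and by the first fact it is precisely $r(S)$. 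Put $m_i = \sigma(r(T_i))$, so the hypothesis reads $m_1 < m_2 < \cdots < m_n$.

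Next I would argue by induction on $i$ that $U_{i-1} := T_1 \oplus \cdots \oplus T_{i-1}$ is well-defined (each successive $\oplus$ joins two genuinely disjoint subtrees) and that $V(U_{i-1}) \cap V(T_j) = \emptyset$ for all $j$ with $i \le j \le n$. The case $i = 2$ is immediate, since $U_1 = T_1$ and the $T_k$ are pairwise disjoint. For the inductive step, the statement for $i-1$ gives that $U_{i-2}$ is disjoint from $T_{i-1}$ and from every $T_j$ with $j \ge i$; hence $U_{i-1} = U_{i-2} \oplus T_{i-1}$ is well-defined, and as with any $\oplus$ we have $V(U_{i-1}) = V(U_{i-2}) \cup V(T_{i-1}) \cup V(P)$, where $P$ is the unique path in $T$ joining $U_{i-2}$ to $T_{i-1}$ whose interior vertices lie outside both of them. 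Since $U_{i-2}$ and $T_{i-1}$ already miss $T_j$, it remains only to prove $V(P) \cap V(T_j) = \emptyset$ for each fixed $j \ge i$.

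Suppose, for contradiction, that some vertex $x$ lies in $V(P) \cap V(T_j)$. The two endpoints of $P$ lie in $U_{i-2}$ and in $T_{i-1}$, neither of which meets $T_j$, so $x$ is an interior vertex of $P$ and thus lies outside $U_{i-2}$ and outside $T_{i-1}$. Write the endpoints of $P$ as $a \in U_{i-2}$ and $b \in T_{i-1}$ and decompose $P$ into its ascent from $a$ to the peak $\operatorname{LCA}_T(a,b)$ followed by its descent to $b$; then $x$ is a proper ancestor of $a$ or a proper ancestor of $b$. In the first case $x$ and $r(U_{i-2})$ are comparable, since both are ancestors of $a$: if $r(U_{i-2})$ is an ancestor of $x$ then $x$ lies on the path from $r(U_{i-2})$ to $a$, which is contained in the connected subtree $U_{i-2}$, contradicting $x \notin V(U_{i-2})$; otherwise $x$ is a proper ancestor of $r(U_{i-2})$, so $\sigma(x) < \sigma(r(U_{i-2})) \le \sigma(r(T_1)) = m_1$ by the second preliminary fact applied to $r(T_1) \in T_1 \subseteq U_{i-2}$, while $x \in V(T_j)$ forces $\sigma(x) \ge m_j > m_1$, a contradiction. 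The second case, $x$ a proper ancestor of $b$, is identical with $T_{i-1}$ in place of $U_{i-2}$: either $x$ lies on the path from $r(T_{i-1})$ to $b$ inside $T_{i-1}$, a contradiction, or $\sigma(x) < \sigma(r(T_{i-1})) = m_{i-1} < m_j \le \sigma(x)$, again a contradiction. This completes the induction and hence the lemma.

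I expect the only genuinely delicate point to be the second preliminary fact, i.e.\ recognizing that the label hypothesis is exactly the statement that each $r(T_i)$ is the top (common ancestor) of $T_i$, together with the small amount of care needed to see that an interior vertex of the connecting path $P$ must avoid both $U_{i-2}$ and $T_{i-1}$. Once these are in place, the argument collapses to a comparison among the three labels $m_1$, $m_{i-1}$, and $m_j$.
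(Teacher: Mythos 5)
Your proof is correct and follows essentially the same route as the paper's: reduce everything to showing that the path joining the already-merged subtrees to the next $T_{i-1}$ cannot meet any later $T_j$, because BFS labels ancestors before descendants and hence every interior vertex of that path has a $\sigma$-value below $\min\{\sigma(v): v \in V(T_{i-1})\}$. The only difference is presentational — the paper merges $T_1 \oplus T_2$ and re-indexes rather than inducting on $U_{i-1}$, and it simply asserts the key inequality $\max\{\sigma(v): v \in V(P)\} < \min\{\sigma(v): v \in V(T_2)\}$, which your ancestor/rooted-tree argument justifies in full.
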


\begin{proof}
    It is sufficient to prove that $T_1 \oplus T_2$ and $T_j$ are disjointed for $3 \leq j \leq n$. Then, a new sequence of mutually disjointed subtrees $\{T'_i\}_{i=1}^{n-1}$ can be constructed such that $T'_1 = T_1 \oplus T_2$ and $T'_{j-1} = T_j$ for $3 \leq j \leq n$. Since $\min\{\sigma(v): v \in V(T_1 \oplus T_2)\} < \min\{\sigma(v): v \in V(T_1)\}$, it is also not hard to see that $\min\{\sigma(v): v \in V(T'_i)\} \leq \min\{\sigma(v): v \in V(T'_j)\}$ for integers $1 \leq i < j \leq n-1$.

    Let $P$ be the path between $T_1$ and $T_2$. Since $\sigma$ is obtained by a breadth-first search, then $\max\{\sigma(v): v \in V(P)\} < \min\{\sigma(v): v \in V(T_2)\}$. Thus, $V(P) \cap V(T_j) = \emptyset$ and $V(T_1 \oplus T_2) \cap V(T_j) = \emptyset$ for $3 \leq j \leq n$.
\end{proof}


\begin{theorem} \label{thm:co_chordal_edge_ranking_bound}
Given a co-chordal graph $G$ such that each vertex is in at most two maximal independent sets and a clique tree $\mathcal{T}_{\mathcal{K}^c}$ of $G^c$, Algorithm~\ref{alg:biclique_tot_bc} will return a biclique cover of $G$ with a size at most $\chi_r'(\mathcal{T}_{\mathcal{K}^c})$ so that $\bc(G) \leq \chi_r'(\mathcal{T}_{\mathcal{K}^c})$.
\end{theorem}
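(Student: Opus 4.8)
The plan is to show (i) that the collection $\bc$ returned by Algorithm~\ref{alg:biclique_tot_bc} is a biclique cover of $G$, and (ii) that for every level $\ell$ the call $\Call{MergeBiclique}{\bc_{\level}[\ell],G}$ returns at most one biclique; since $\bc=\bigcup_{\ell=1}^{r}\Call{MergeBiclique}{\bc_{\level}[\ell],G}$ with $r=\chi_r'(\mathcal{T}_{\mathcal{K}^c})$ (we may assume the optimal edge-ranking $\varphi$ uses every rank in $\llbracket r\rrbracket$, as any edge-ranking can be relabeled to do so without increasing $r$), (ii) gives $|\bc|\le r$, and together with (i) this yields $\bc(G)\le\chi_r'(\mathcal{T}_{\mathcal{K}^c})$.

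Part (i) is the easy part. The recursion in $\Call{FindBiclique}{}$ performs exactly the edge cuts of $\Call{FindPartition}{}$ (the level/ord bookkeeping does not influence the cuts), so by Theorem~\ref{thm:bp_upper_bound} the multiset of bicliques it deposits across all buckets is a biclique partition of $G$, and in particular every edge of $G$ is covered. The routine $\Call{MergeBiclique}{}$ never drops an edge: it only ever replaces two bicliques $\{L,R\},\{L',R'\}$ by $\{L\cup L',R\cup R'\}$ (or its swap), whose edge set contains $L\times R\cup L'\times R'$, and it does so only after verifying the result is still a biclique subgraph of $G$. Hence $\bc$ covers all edges of $G$ by bicliques of $G$.

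For part (ii) I would first establish a structural description of the buckets. Because $\Call{FindBiclique}{}$ always cuts a maximum-rank edge of the current subtree, and because, by the edge-ranking property, any two distinct edges of the same rank $k$ are joined by a path carrying an edge of rank $>k$, every subtree handed to $\Call{FindBiclique}{}$ has at most one edge of its maximum rank; consequently the cut edges along any root-to-leaf path of the recursion have strictly decreasing rank. Fix $\ell\in\llbracket r\rrbracket$ and put $k=r+1-\ell$. I claim that when a rank-$k$ edge $e$ is cut, the subtree being processed is precisely the connected component $C_e$ of $\mathcal{T}_{\mathcal{K}^c}\setminus\{e':\varphi(e')>k\}$ that contains $e$: it lies inside $C_e$ since it is connected, contains $e$, and has no edge of rank $>k$ (otherwise that edge, not $e$, would have been selected); and it contains $C_e$ since the edges removed on the ancestors of this call all have rank $>k$, so deleting them disconnects $\mathcal{T}_{\mathcal{K}^c}$ no more than deleting all of $\{e':\varphi(e')>k\}$. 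Therefore the bicliques in $\bc_{\level}[\ell]$ are indexed by the rank-$k$ edges $e^{(1)},\dots,e^{(m)}$; the $j$-th one is a biclique subgraph of $G[C_{e^{(j)}}]$ (this is exactly the biclique produced in Lemma~\ref{lm:bc_partition} applied to the clique tree $C_{e^{(j)}}$ of $G^c[C_{e^{(j)}}]$ supplied by Lemma~\ref{lm:chordal_subtree_subgraph}); the components $C_{e^{(1)}},\dots,C_{e^{(m)}}$ are pairwise vertex-disjoint subtrees of $\mathcal{T}_{\mathcal{K}^c}$; and the recorded $\ord$-value of the $j$-th biclique is $\min\{\sigma(v):v\in V(C_{e^{(j)}})\}$, so sorting the bucket by $\ord$ (as $\Call{MergeBiclique}{}$ does) lists these components in strictly increasing order of $\min\sigma$.

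Part (ii) then follows by induction on the number of bicliques of $\bc_{\level}[\ell]$ already processed by $\Call{MergeBiclique}{}$, in the sorted order $C^{(1)},\dots,C^{(m)}$ above: after $i$ of them have been processed, the accumulator holds exactly one biclique and it is a biclique subgraph of $G[C^{(1)}\oplus\cdots\oplus C^{(i)}]$. The base case $i=1$ is immediate. For the step, $C^{(1)}\oplus\cdots\oplus C^{(i-1)}$ and $C^{(i)}$ are disjoint subtrees of $\mathcal{T}_{\mathcal{K}^c}$ by Lemma~\ref{lm:disjoint_seq_tree}, whose hypotheses hold because $\sigma$ is a breadth-first ordering from a leaf and the $C^{(j)}$ are mutually disjoint and sorted by $\min\sigma$; hence Proposition~\ref{prop:co_chordal_merge_larger_biclique} — and here is the only place the hypothesis that each vertex of $G$ lies in at most two maximal independent sets is used — guarantees that one of $\{L^{(i)}\cup L',R^{(i)}\cup R'\}$, $\{R^{(i)}\cup L',L^{(i)}\cup R'\}$ is a biclique subgraph of $G[C^{(1)}\oplus\cdots\oplus C^{(i)}]$, where $\{L',R'\}$ is the accumulator's current unique biclique and $\{L^{(i)},R^{(i)}\}$ the one being processed. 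So the inner \textbf{if}/\textbf{elsif} of $\Call{MergeBiclique}{}$ succeeds, $\flag$ is set to $\False$, the incoming biclique is merged in rather than stored separately, and the accumulator again has size one. Thus $|\Call{MergeBiclique}{\bc_{\level}[\ell],G}|\le1$ for every $\ell$, which with part (i) proves the theorem. I expect the main obstacle to be the structural step in the third paragraph — identifying, for each rank-$k$ edge, the subtree $\Call{FindBiclique}{}$ is working on at the moment of the cut with the component $C_e$ — because the disjointness of the $C_{e^{(j)}}$, the correctness of the $\ord$-based sorting, and hence the iterated use of Proposition~\ref{prop:co_chordal_merge_larger_biclique} all rest on it.
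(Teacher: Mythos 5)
Your proposal is correct and follows the same route as the paper's proof: establish that \textsc{FindBiclique} deposits a biclique partition (via Theorem~\ref{thm:bp_upper_bound}), note that the number of levels equals $\chi_r'(\mathcal{T}_{\mathcal{K}^c})$, and then invoke Lemma~\ref{lm:disjoint_seq_tree} and Proposition~\ref{prop:co_chordal_merge_larger_biclique} to collapse each level's bucket to a single biclique. The only difference is that you supply the structural step the paper leaves implicit --- identifying the subtree processed at a rank-$k$ cut with the component of $\mathcal{T}_{\mathcal{K}^c}$ minus all higher-ranked edges, which is exactly what makes the disjointness and $\ord$-sorting hypotheses of those two results hold --- so your write-up is a more detailed version of the same argument.
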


\begin{proof}
    By Theorem~\ref{thm:bp_upper_bound}, the collection of all bicliques in $\bc_{\level}$ after calling FindBiclique in  Line~\ref{ln:FindBiclique} of Algorithm~\ref{alg:biclique_tot_bc} is a biclique partition of $G$ so that it is a biclique cover of $G$. 
    
    The number of ``level" in $\bc_{\level}$ is equal to $\chi_r'(\mathcal{T}_{\mathcal{K}^c})$. According to Lemma~\ref{lm:disjoint_seq_tree} and Proposition~\ref{prop:co_chordal_merge_larger_biclique}, all the biclique in $\bc_{\level}[i]$ can be merged into a biclique subgraph of $G$ by calling MergeBiclique($\bc_{\level}[i], G$).
\end{proof}

\begin{corollary}
    If $G$ is co-chordal such that each vertex is in at most two maximal independent sets and $G^c$ has a clique tree with an optimal edge-ranking number of $\lceil \log_2(\mc(G^c)) \rceil$, then $\bc(G) = \lceil \log_2(\mc(G^c)) \rceil$.
\end{corollary}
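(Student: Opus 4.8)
The plan is to obtain the claimed equality by sandwiching $\bc(G)$ between a lower bound and an upper bound that coincide. For the lower bound I would simply invoke Theorem~\ref{thm:bc_log}, which holds for an arbitrary graph and already gives $\bc(G) \geq \lceil \log_2(\mc(G^c)) \rceil$ with no extra hypotheses; this direction requires nothing beyond citing that result.

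For the matching upper bound I would use the standing hypothesis that $G^c$ admits a clique tree $\mathcal{T}_{\mathcal{K}^c}$ with $\chi_r'(\mathcal{T}_{\mathcal{K}^c}) = \lceil \log_2(\mc(G^c)) \rceil$. Since $G$ is co-chordal and each vertex of $G$ lies in at most two maximal independent sets, Theorem~\ref{thm:co_chordal_edge_ranking_bound} applies to this particular clique tree (any clique tree of $G^c$ is a legitimate input to Algorithm~\ref{alg:biclique_tot_bc}), and it yields $\bc(G) \leq \chi_r'(\mathcal{T}_{\mathcal{K}^c}) = \lceil \log_2(\mc(G^c)) \rceil$. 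It is worth recording here that every clique tree of $G^c$ has exactly $\mc(G^c)$ vertices, so by the remark that $\chi_r'(T) \geq \lceil \log_2(|V(T)|) \rceil$ one always has $\chi_r'(\mathcal{T}_{\mathcal{K}^c}) \geq \lceil \log_2(\mc(G^c)) \rceil$; the hypothesis of the corollary is precisely that this unavoidable lower bound on the edge-ranking number is attained by some clique tree.

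Combining the two inequalities gives $\lceil \log_2(\mc(G^c)) \rceil \leq \bc(G) \leq \lceil \log_2(\mc(G^c)) \rceil$, hence equality, which is what I want. There is no genuine obstacle internal to the corollary: all the substantive content is already packaged in Theorem~\ref{thm:bc_log} (the maximal-clique logarithmic lower bound, established via the reduction to biclique covers of the complete graph $\mathcal{K}_d$) and in Theorem~\ref{thm:co_chordal_edge_ranking_bound} (the edge-ranking upper bound produced by Algorithm~\ref{alg:biclique_tot_bc}, whose correctness rests on Proposition~\ref{prop:co_chordal_merge_larger_biclique} and Lemma~\ref{lm:disjoint_seq_tree}). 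The only step needing a moment's care is verifying that the two sets of hypotheses align, which they do, since ``co-chordal with every vertex in at most two maximal independent sets'' is exactly the hypothesis required by Theorem~\ref{thm:co_chordal_edge_ranking_bound}, while Theorem~\ref{thm:bc_log} imposes no restriction on $G$ at all.
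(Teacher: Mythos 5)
Your proof is correct and is essentially the argument the paper intends: the corollary is stated without proof as an immediate consequence of sandwiching $\bc(G)$ between the lower bound of Theorem~\ref{thm:bc_log} and the upper bound of Theorem~\ref{thm:co_chordal_edge_ranking_bound} applied to the hypothesized clique tree. Your side remark that the hypothesis merely asserts attainment of the unavoidable bound $\chi_r'(\mathcal{T}_{\mathcal{K}^c}) \geq \lceil \log_2(\mc(G^c)) \rceil$ is a correct observation but not needed for the argument.
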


Since a path graph is chordal and its clique tree is also a path, Theorem~\ref{thm:co_chordal_edge_ranking_bound} and Theorem~\ref{thm:bc_log} can provide the biclique cover number of a co-path graph. Note that co-path graphs are conflict graphs associated with special ordered sets type 2 and the biclique covers of those conflict graphs can be used to construct mixed-integer programming formulations~\cite{huchette2019combinatorial}.

\begin{corollary}
Given a co-path graph $P_n^c$ with $n$ vertices, $\bc(P^c_n) = \lceil \log_2(n-1) \rceil$.
\end{corollary}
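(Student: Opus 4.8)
The plan is to pin down $\bc(P_n^c)$ by sandwiching it between matching lower and upper bounds, using Theorem~\ref{thm:bc_log} for the lower bound and Theorem~\ref{thm:co_chordal_edge_ranking_bound} for the upper bound. Everything hinges on identifying the relevant combinatorial data of $P_n$: a path has no induced cycle, so $P_n$ is chordal and $P_n^c$ is co-chordal; the maximal cliques of $P_n$ are exactly its $n-1$ edges $\{v_i,v_{i+1}\}$ ($i=1,\dots,n-1$), hence $\mc(P_n)=n-1$; dually, the maximal independent sets of $P_n^c$ are these same pairs, and since each vertex $v_i$ of $P_n$ lies in at most two edges, each vertex of $P_n^c$ lies in at most two maximal independent sets. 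This places us exactly in the hypotheses of Theorem~\ref{thm:co_chordal_edge_ranking_bound} (and of the corollary immediately preceding the statement).

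First I would get the lower bound: since $(P_n^c)^c=P_n$ and $\mc(P_n)=n-1$, Theorem~\ref{thm:bc_log} gives $\bc(P_n^c)\ge\lceil\log_2(n-1)\rceil$ with no further work.

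Next I would get the upper bound. The key subclaim is that a clique tree $\mathcal{T}_{\mathcal{K}}$ of $P_n$ is the path on $n-1$ vertices: take the cliques $K^i=\{v_i,v_{i+1}\}$ and join $K^i$ to $K^{i+1}$ (they share $v_{i+1}$); for $i<j$ one has $K^i\cap K^j=\emptyset$ unless $j=i+1$, in which case $K^i\cap K^j=\{v_{i+1}\}$, and the only cliques on the connecting path are $K^i,K^{i+1}$ themselves, both containing $v_{i+1}$, so the clique-intersection property holds and $\mathcal{T}_{\mathcal{K}}\cong P_{n-1}$. Invoking the proposition that $\chi_r'(P_m)=\lceil\log_2 m\rceil$ with $m=n-1$ gives $\chi_r'(\mathcal{T}_{\mathcal{K}})=\lceil\log_2(n-1)\rceil$, and Theorem~\ref{thm:co_chordal_edge_ranking_bound} then yields $\bc(P_n^c)\le\chi_r'(\mathcal{T}_{\mathcal{K}})=\lceil\log_2(n-1)\rceil$. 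Combining the two bounds gives $\bc(P_n^c)=\lceil\log_2(n-1)\rceil$; equivalently, one can cite the preceding corollary directly, since its three hypotheses for $G=P_n^c$ are precisely the facts just verified.

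I do not expect a genuine obstacle: the two theorems carry all the weight. The only points needing care are the bookkeeping of the maximal-clique/maximal-independent-set correspondence, the verification that the clique tree of $P_n$ is really a path on $n-1$ vertices (so that the path-graph proposition applies and the lower and upper bounds actually coincide), and the degenerate small cases --- in particular $n=2$, where $P_2^c$ is edgeless, $\bc(P_2^c)=0$, and the formula still reads $\lceil\log_2(n-1)\rceil=\lceil\log_2 1\rceil=0$ (the statement being understood for $n\ge 2$).
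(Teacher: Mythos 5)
Your proposal is correct and follows exactly the paper's intended argument: the paper derives this corollary by combining the lower bound of Theorem~\ref{thm:bc_log} (with $\mc(P_n)=n-1$) and the upper bound of Theorem~\ref{thm:co_chordal_edge_ranking_bound}, noting that a clique tree of $P_n$ is a path on $n-1$ vertices whose optimal edge-ranking number is $\lceil\log_2(n-1)\rceil$. Your additional verifications (each vertex lying in at most two maximal independent sets, the clique-intersection property, the $n=2$ edge case) are exactly the bookkeeping the paper leaves implicit.
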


We also want to remark that Algorithm~\ref{alg:biclique_tot_bc} can return a biclique cover with a size of $\lceil \log_2(\mc(G^c)) \rceil$ of $G$ if $G$ is a co-windmill graph: a clique tree of a windmill graph can be a path, all the maximal cliques in $G^c$ share the same vertex, and all the other vertices in $G^c$ are in only one maximal clique.

\begin{remark}
Given a co-windmill graph $G$, $\bc(G) = \lceil \log_2(\mc(G^c)) \rceil$.
\end{remark}

\section{Conclusions and Future Work} \label{sec:c_fw_bc}

Given an arbitrary graph $G$, $\bc(G) \geq \lceil \log_2(\mc(G^c)) \rceil$. We also showed that $\mc(G^c) \geq \chi(G)$. Thus, $$\bc(G) \geq \lceil \log_2(\mc(G^c)) \rceil \geq \lceil \log_2(\chi(G)) \rceil.$$ We discussed the cases where $\lceil \log_2(\mc(G^c)) \rceil$ provides a strictly better lower bound than $\log_2(\chi(G)) \rceil$, $\omega(G_E)$ and $\frac{|M(G)|^2}{|E(G)|}$ for the biclique cover number $\bc(G)$. If $G$ is a co-chordal graph, $\bc(G) \geq \lceil \log_2(\bp(G) + 1) \rceil$ or $\bp(G) \leq 2^{\bc(G)} - 1$. It is a better bound than $\bp(G) \leq \frac{1}{2}(3^{\bc(G)} - 1)$~\cite{pinto2013biclique} if $G$ is co-chordal and $\bc(G) > 1$. Then, motivated by the heuristic for biclique partitions in Algorithm~\ref{alg:biclique_partition_heuristic} and Algorithm 2 in~\cite{lyu2022modeling}, we designed a polynomial-time heuristic for biclique covers on co-chordal graphs in Algorithm~\ref{alg:biclique_tot_bc}. We then showed that if the graph $G$ is co-chordal and each vertex of $G$ is in at most two independent sets, then $\bc(G) \leq \chi_r'(\mathcal{T}_{\mathcal{K}^c})$ for an arbitrary clique tree $\mathcal{T}_{\mathcal{K}^c}$ of $G^c$. Furthermore, we showed that our heuristic in Algorithm~\ref{alg:biclique_tot_bc} can return a minimum biclique cover if we make a further assumption that $G^c$ has a clique tree with an optimal edge-ranking number of $\lceil \log_2(\mc(G^c)) \rceil$.

We also want to remark some future directions. We have shown that $\bc(G) \leq \chi_r'(\mathcal{T}_{\mathcal{K}^c})$ for an arbitrary clique tree $\mathcal{T}_{\mathcal{K}^c}$ of $G^c$ if the graph $G$ is co-chordal and each vertex of $G$ is in at most two independent sets. Could we identify other subclass of co-chordal with such upper bound of biclique cover number? Could we design a polynomial-time algorithm to find the clique tree of a chordal graph with the minimum optimal edge-ranking number?

\section*{Acknowledgements}

\bibliography{mybibfile}

\newpage
\appendix
\section{An Algorithm for Clique Trees of Chordal Graphs}~\label{sec:clique_tree_appendix}
Blair and Peyton~\cite{blair1993introduction} expanded the MCS algorithm to compute a clique tree of a chordal graph $G$ in polynomial time shown in Algorithm~\ref{alg:mcs_clique_tree}.

\begin{algorithm}[H]
\begin{algorithmic}[1]
\State \textbf{Input}: A chordal graph $G$.
\State \textbf{Output}: A clique tree $\mathcal{T}_{\mathcal{K}} =(\mathcal{K}, \mathcal{E}_{\mathcal{T}})$ and a set of all maximal clique $\mathcal{K}$ of $G$.
\State $\prevc \leftarrow 0$. $\mathcal{L}_{n+1} \leftarrow \emptyset$. $s \leftarrow 0$. $\mathcal{E}_{\mathcal{T}} \leftarrow \emptyset$. $\mathcal{K} \leftarrow \emptyset$.
\For{$i = n, n-1, \hdots, 1$}
\State Choose a vertex $v \in V - \mathcal{L}_{i+1}$ for which $|N_G(v) \cap \mathcal{L}_{i+1}|$ is maximum.
\State $\alpha(v) \leftarrow i$
\State $\newc \leftarrow |N_G(v) \cap \mathcal{L}_{i+1}|$
\If{$\newc \leq \prevc$}
\State $\mathcal{K} \leftarrow \mathcal{K} \cup \{K^s\}$
\State $s \leftarrow s + 1$
\State $K^s \leftarrow N_G(v) \cap \mathcal{L}_{i+1}$
\If{$\newc \neq 0$}
\State $u \leftarrow \argmin_{u' \in K^s}\{j | \alpha(u') = j\}$.
\State $p \leftarrow \clique(u)$
\State $\mathcal{E}_{\mathcal{T}} \leftarrow \mathcal{E}_{\mathcal{T}} \cup \{K^sK^p\}$
\EndIf
\EndIf
\State $\clique(v) \leftarrow s$
\State $K^s \leftarrow K^s \cup \{v\}$
\State $\mathcal{L} \leftarrow \mathcal{L}_{i+1} \cup \{v\}$
\State $\prevc \leftarrow \newc$
\EndFor
\State \textbf{return} $(\mathcal{K}, \mathcal{E}_{\mathcal{T}})$, $\mathcal{K}$
\end{algorithmic}
\caption{An algorithm that compute a clique tree of a chordal graph $G$~\cite{blair1993introduction}.} \label{alg:mcs_clique_tree}
\end{algorithm}

\end{document}